\def\qed{\hfill $\Box$}
\newcommand{\argmin}{\operatornamewithlimits{argmin}}
\newtheorem{thm}{\bfseries Theorem}[section]
\newtheorem{lem}[thm]{\bfseries Lemma} 
\newtheorem{remark}[thm]{\bfseries Remark} 
\begin{document}
\title{The quadratic M-convexity testing problem\footnotemark[1]}
\author{Yuni Iwamasa\footnotemark[2]}
\date{\today}
\footnotetext[1]{A preliminary version of this paper has appeared in the proceedings of the 10th Japanese-Hungarian Symposium on Discrete Mathematics and Its Applications (JH 2017).}
\footnotetext[2]{Department of Mathematical Informatics,
Graduate School of Information Science and Technology,
University of Tokyo, Tokyo, 113-8656, Japan.\\
Email: \texttt{yuni\_iwamasa@mist.i.u-tokyo.ac.jp}
}
\maketitle

\begin{abstract}
	M-convex functions,
	which are a generalization of valuated matroids,
	play a central role in discrete convex analysis.
	Quadratic M-convex functions constitute a basic and important subclass of M-convex functions,
	which has a close relationship with phylogenetics as well as valued constraint satisfaction problems.
	In this paper,
	we consider the quadratic M-convexity testing problem (QMCTP),
	which is the problem of deciding whether a given quadratic function on $\{0,1\}^n$ is M-convex.
	We show that QMCTP is co-NP-complete in general,
	but is polynomial-time solvable under a natural assumption.
	Furthermore,
	we propose an $O(n^2)$-time algorithm for solving QMCTP in the polynomial-time solvable case.
\end{abstract}
\begin{quote}
	{\bf Keywords: }
	discrete convex analysis, M-convex, testing problem
\end{quote}

\section{Introduction}
A function $f$ on $\{0,1\}^n$ is said to be {\it M-convex}~\cite{AM/M96}
if it satisfies the following generalization of matroid exchange axiom:
\begin{description}
	\item[Exchange Axiom:] For $x,y \in \textrm{dom }f$ and $i \in \textrm{supp}(x) \setminus \textrm{supp}(y)$, there exists $j \in {\rm supp}(y) \setminus \textrm{supp}(x)$ such that
	\begin{align*}
	f(x) + f(y) \geq f(x - \chi_i + \chi_j) + f(y + \chi_i - \chi_j),
	\end{align*}
\end{description}
where $\textrm{dom }f := \{ x \in \{0,1\}^n \mid \text{$f(x)$ takes a finite value} \}$ is the effective domain of $f$, $\textrm{supp}(x) := \{ i \mid x_i = 1 \}$ for $x = (x_1,x_2,\dots,x_n) \in \{0,1\}^n$,
and $\chi_i$ is the $i$th unit vector.
In general, M-convex functions are defined on the integer lattice $\mathbf{Z}^n$.
In this paper,
we restrict ourselves to M-convex functions defined on $\{0,1\}^n$, which are equivalent to the negative of {\it valuated matroids} introduced by Dress--Wenzel~\cite{AML/DW90, AM/DW92}.
M-convex functions play a central role in {\it discrete convex analysis}~\cite{book/Murota03}.
Indeed, M-convex functions appear in many areas such as operations research, economics, and game theory (see e.g.,~\cite{book/Murota03, incollection/M09,  JMID/M16}).
Quadratic M-convex functions also appear in many areas,
and constitute a basic and important class of discrete functions.
Quadratic M-convex functions have a close relationship with {\it tree metrics}~\cite{JJIAM/HM04},
which is an important concept for mathematical analysis in phylogenetics~(see e.g., \cite{book/SempleSteel03}).
Recently, Iwamasa--Murota--\v{Z}ivn\'y~\cite{arxiv/IMZ17} have revealed hidden quadratic M-convexity in valued constraint satisfaction problems (VCSPs) with joint winner property~\cite{AI/CZ11},
and presented a perspective to their polynomial-time solvability from discrete convex analysis.

In this paper,
we consider the {\it quadratic M-convexity testing problem (QMCTP)} defined as follows.
Let $\overline{\mathbf{R}} := \mathbf{R} \cup \{ +\infty \}$ and $[n] := \{1,2, \dots, n\}$ for a positive integer $n$ with $n \geq 4$.
\begin{description}
	\item[Given:] $a_i \in \mathbf{R}$ for $i \in [n]$, $a_{ij} \in \overline{\mathbf{R}}$ for $1 \leq i < j \leq n$, and a positive integer $r$ with $2 \leq r \leq n-2$.
	\item[Question:] Is the quadratic function $f : \{0,1\}^n \rightarrow \overline{\mathbf{R}}$ defined by
	\begin{align}
	f(x_1,x_2,\dots,x_n) :=
	\begin{cases}
	\displaystyle
	\sum_{i \in [n]} a_i x_i + \sum_{1 \leq i < j \leq n}a_{ij}x_ix_j & \text{if $\displaystyle\sum_{i \in [n]} x_i = r$},\\
	+\infty & \text{otherwise}
	\end{cases}\label{eq:f}
	\end{align}
	M-convex?
\end{description}
Notice that if $r = 1$ or $r = n-1$,
then $f$ of the form~(\ref{eq:f}) is always a linear function.
Here we assume that $a_{ij} = a_{ji}$ for distinct $i,j \in [n]$ and $\textrm{dom }f$ is nonempty.
In this paper, functions can take the infinite value $+\infty$,
where $a < +\infty$, $a+\infty = +\infty$ for $a \in \mathbf{R}$,
and $0 \cdot (+\infty) = 0$.
In the case where $a_{ij}$ takes a finite value for all distinct $i,j \in [n]$,
the following theorem is immediate from {\cite[Theorem~6.4]{book/Murota03}} (see also \cite[Proposition~6.8]{book/Murota03}).
\begin{thm}[\cite{book/Murota03}; see also {\cite[Theorem~5.2]{AAM/MS04}}]\label{thm:poly finite}
	Suppose that $a_{ij}$ takes a finite value for all distinct $i,j \in [n]$.
	Then a function of the form~{\rm (\ref{eq:f})} is M-convex if and only if
	\begin{align}\label{eq:anti-tree metric}
	a_{ij} + a_{kl} \geq \min \{ a_{ik} + a_{jl}, a_{il}+a_{jk} \}
	\end{align}
	holds for every distinct $i,j,k,l \in [n]$.
\end{thm}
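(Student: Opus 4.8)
The plan is to prove both implications by reducing the $\{0,1\}^n$ Exchange Axiom to an explicit numerical inequality among the four coefficients indexed by the symmetric difference of the two arguments. First I would record the simplifications forced by the finiteness hypothesis. Since every $a_{ij}$ is finite, $\textrm{dom } f$ is exactly the level set $\{ x \in \{0,1\}^n \mid \sum_{i} x_i = r\}$, i.e.\ the family of all $r$-element subsets of $[n]$; this is the base family of the uniform matroid $U_{r,n}$ and already satisfies the (unweighted) base exchange property, so no \emph{combinatorial} obstruction to the Exchange Axiom can arise and only the quantitative inequality needs checking. Moreover, under an exchange $x \mapsto x-\chi_i+\chi_j$, $y\mapsto y+\chi_i-\chi_j$, the linear part $\sum_i a_i x_i$ contributes $(-a_i+a_j)$ to $f(x)$ and $(a_i-a_j)$ to $f(y)$, so the linear terms cancel identically and may be ignored; writing $\chi_S := \sum_{i\in S}\chi_i$ for $S\subseteq[n]$, it therefore suffices to work with the purely quadratic value $g(\chi_S)=\sum_{\{k,l\}\subseteq S} a_{kl}$.

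The computational heart is a single identity. For $r$-sets $S,T$ with $A := S\setminus T$, $B := T\setminus S$, $C := S\cap T$, and for $i\in A$, $j\in B$, expanding $g$ and observing that all terms supported on $C$ cancel gives
\begin{align*}
\bigl[f(\chi_S)+f(\chi_T)\bigr]-\bigl[f(\chi_S-\chi_i+\chi_j)+f(\chi_T+\chi_i-\chi_j)\bigr]
=\sum_{k\in B\setminus\{j\}}(a_{jk}-a_{ik})-\sum_{k\in A\setminus\{i\}}(a_{jk}-a_{ik}).
\end{align*}
Specializing to $|A|=|B|=2$, say $A=\{i,i'\}$ and $B=\{j,j'\}$, the two choices $j$ and $j'$ of exchange partner turn the requirement ``the displayed quantity is $\geq 0$ for some partner'' into precisely $a_{ii'}+a_{jj'}\geq\min\{a_{ij'}+a_{i'j},\,a_{ij}+a_{i'j'}\}$, which is exactly one instance of~(\ref{eq:anti-tree metric}) on the quadruple $\{i,i',j,j'\}$. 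Thus in the ``two against two'' case the Exchange Axiom \emph{is} the four-point condition, with the distinguished sum being (pair inside $S\setminus T$)$+$(pair inside $T\setminus S$).

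For necessity I would simply feed the right configurations into this equivalence. Given distinct $i,j,k,l$, the hypotheses $2\le r\le n-2$ guarantee both that a two-element symmetric difference is an admissible configuration and that a core $C\subseteq[n]\setminus\{i,j,k,l\}$ of size $r-2$ exists. Placing a chosen pair of the four points in $A$ and the complementary pair in $B$ and applying M-convexity yields the instance of~(\ref{eq:anti-tree metric}) whose distinguished sum is that pairing; running through the three pairings produces all three instances and hence the full condition.

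Sufficiency is the direction carrying the real content. The only substantial point is the passage from the local instances ($|S\setminus T|=2$, which I have shown coincide with the assumed four-point conditions) to the Exchange Axiom for arbitrary $S,T$. I would dispatch this by invoking the local characterization of M-convex functions / valuated matroids --- the local exchange property as in~\cite{book/Murota03} (and~\cite{AAM/MS04}) --- which certifies that verifying the exchange inequality for all $S,T$ with $|S\setminus T|=2$ already implies it in general. The alternative, a direct induction on $|S\setminus T|$ in which a single four-point inequality is used to pick an exchange element $j\in B$ that does not increase the value while shrinking the symmetric difference, is also available; there the delicate part, and the expected main obstacle, is keeping the inductive bookkeeping clean so that the term cancellation over $C$ is correctly maintained at each step.
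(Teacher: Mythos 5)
Your proposal is correct and follows essentially the same route as the paper: the paper derives this theorem from Murota's local exchange characterization (its Theorem~\ref{thm:general M-conv}) via the cancellation computation in Lemma~\ref{lem:quadratic M-conv.}, which is exactly your identity reducing the exchange inequality on a two-element symmetric difference to the four-point condition~(\ref{eq:anti-tree metric}), with the finiteness hypothesis ensuring ${\rm dom}\ f$ is the base family of $U_{r,n}$ and that a core of size $r-2$ avoiding any quadruple exists (this is the Type~I case of Theorem~\ref{thm:Type}). Your necessity and sufficiency arguments, including the appeal to the local-to-global theorem rather than a direct induction, match the paper's proof in substance.
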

By Theorem~\ref{thm:poly finite},
if $a_{ij}$ is a finite value for all distinct $i,j \in [n]$,
then QMCTP is solvable in polynomial time.
However, if $a_{ij}$ can take the infinite value $+\infty$ for some distinct $i,j \in [n]$,
there exists an example such that the condition~(\ref{eq:anti-tree metric}) does not characterize M-convexity.
Indeed, define $f : \{0,1\}^5 \rightarrow \overline{\mathbf{R}}$ by
\begin{align}\label{eq:example}
f(x_1,x_2,x_3,x_4,x_5) :=
\begin{cases}
x_1x_3 + 2 x_1x_4 + (+\infty)\cdot x_1x_5 + x_3x_5 +2 x_4x_5 & \text{if $\sum_i x_i = 3$},\\
+\infty & \text{otherwise}.
\end{cases}
\end{align}
Then $f$ is M-convex; this can be verified by the definition of M-convexity.
However, the condition~(\ref{eq:anti-tree metric}) is violated since $a_{12} + a_{34} < \min \{ a_{13} + a_{24}, a_{14} + a_{23} \}$
with $a_{12} + a_{34} = 0$, $a_{13} + a_{24} = 1$, and $a_{14} + a_{23} = 2$.
Thus, in the general case, the complexity of QMCTP is not settled yet.

A quadratic function of the form~(\ref{eq:f}) with an infinite quadratic coefficient arises naturally from a binary VCSP function.
A binary VCSP function $F$ satisfying the joint winner property can be transformed to
a function represented as the sum two {\it special} M-convex functions of the form~(\ref{eq:f}) with an infinite quadratic coefficient.
This fact explains the polynomial-time solvability of $F$ (see~\cite{arxiv/IMZ17} for details).
The class of functions represented as the sum of two {\it general} quadratic M-convex functions corresponds to a new tractable class of binary VCSPs.
Thus we need to consider QMCTP in the general case
for the first step to identify such a new tractable class.

In this paper, we settle QMCTP by showing the following negative result.
\begin{thm}\label{thm:co-NP comp}
	{\rm QMCTP} is co-NP-complete.
\end{thm}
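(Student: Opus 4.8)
that QMCTP lies in co-NP, and that it is co-NP-hard. The membership in co-NP should be the routine direction. A certificate for the "no" instances (i.e., for non-M-convexity) is a triple $(x, y, i)$ with $x, y \in \mathrm{dom}\,f$ and $i \in \mathrm{supp}(x) \setminus \mathrm{supp}(y)$ that violates the exchange axiom: one checks in polynomial time that for every $j \in \mathrm{supp}(y) \setminus \mathrm{supp}(x)$ one has $f(x) + f(y) < f(x - \chi_i + \chi_j) + f(y + \chi_i - \chi_j)$. Since $f$ is evaluated by a fixed quadratic formula, each such evaluation is polynomial, and there are at most $n$ candidate indices $j$ to inspect. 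Hence verifying a violation is polynomial, and QMCTP $\in$ co-NP.

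The substance is the co-NP-hardness reduction. I would reduce from a known NP-complete problem whose "yes" instances correspond to \emph{failures} of the min-condition~(\ref{eq:anti-tree metric}) being repairable, exploiting precisely the gap revealed by the example~(\ref{eq:example}): when some $a_{ij}$ are $+\infty$, M-convexity is genuinely more subtle than the quartet inequality. The natural source problem is something combinatorial where the infinite entries encode forbidden pairs — a graph-theoretic structure such as (the complement of) a feasibility or 3-coloring/3-SAT instance. The idea is to design $n$, the rank $r$, the finite coefficients $a_i, a_{ij}$, and the pattern of $+\infty$ entries so that the constructed $f$ is M-convex if and only if the underlying combinatorial instance is a "no" instance. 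Concretely, one engineers a gadget in which each potential violation of the exchange axiom — i.e., each choice of $x, y, i$ for which \emph{every} swap index $j$ simultaneously fails — corresponds to a witnessing structure in the source instance (e.g., a satisfying assignment, or a monochromatic configuration). The infinite quadratic coefficients play the crucial role of killing off precisely those swaps that would otherwise restore the inequality, so that the exchange axiom can fail only along the encoded combinatorial obstruction.

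The main obstacle, and where most of the work will lie, is the correctness of this gadget in \emph{both} directions. The forward direction (source is "no" $\Rightarrow$ $f$ is M-convex) requires verifying the exchange axiom for \emph{all} pairs $x, y \in \mathrm{dom}\,f$ and \emph{all} $i$, not merely the intended ones; stray exchanges among the finite coefficients must not accidentally violate M-convexity. This typically forces the finite part of $f$ to be chosen so that, restricted to the feasible supports, it already satisfies a tree-metric-type condition~(\ref{eq:anti-tree metric}), while the infinite entries are confined to a controlled sparse pattern. The reverse direction (source is "yes" $\Rightarrow$ $f$ is not M-convex) requires exhibiting the specific violating triple. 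Balancing these — making the $+\infty$ pattern rich enough to encode NP-hardness yet structured enough that no \emph{unintended} violation appears — is the delicate heart of the construction. I would expect to first fix the domain geometry (the value of $r$ and which supports are feasible, i.e. which pairs are forbidden by an infinite $a_{ij}$), then calibrate the finite weights, and finally prove a clean combinatorial equivalence between exchange-axiom violations and the source instance's solutions.

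**A technical remark on the search for the source problem:** because Theorem~\ref{thm:poly finite} shows the all-finite case is polynomial, the hardness must be \emph{created entirely by the infinite entries}. I would therefore look for a reduction in which the finite coefficients are as trivial as possible (e.g., encoding a fixed tree metric or even all-zero off-diagonal finite weights) and the whole combinatorial difficulty is pushed into deciding whether the forbidden-pair graph admits a configuration that breaks exchange. This suggests that the cleanest target is a graph problem such as \textsc{Independent Set}, \textsc{Clique}, or a constraint-satisfaction variant, where the forbidden pairs (infinite $a_{ij}$) are exactly the non-edges (or edges) of the input graph and $r$ is tuned to the target set size.
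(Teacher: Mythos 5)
Your co-NP membership argument is fine: a violating triple $(x,y,i)$ is a polynomial-size certificate of non-M-convexity checkable in polynomial time. And your closing remark guesses the correct construction family --- in the paper's reduction the finite coefficients are indeed all zero, $a_{ij}=+\infty$ precisely on the edges of an input graph, and the source problem is the stable set problem with $r$ tied to the target size. But what you have written for hardness is a plan, not a proof: no gadget is specified, and both correctness directions --- which you yourself call ``the delicate heart of the construction'' --- are deferred. Moreover the plan points partly in the wrong direction: you anticipate having to ``calibrate the finite weights'' so that the finite part satisfies a tree-metric-type condition~(\ref{eq:anti-tree metric}). In the actual proof no such calibration is needed, because once all finite values are zero, $f$ is M-convex if and only if ${\rm dom}\ f$ is an M-convex set (a matroid base family); the values play no role, and the entire question collapses to a combinatorial property of the pattern of infinite entries.

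That collapse is where the real content lies, and it is absent from your proposal. Two ingredients are needed. First, a characterization of when ${\rm dom}\ f$ is M-convex: letting $G_f$ be the graph of infinite entries and $X$ the set of coordinates occurring in some feasible point (i.e., in some stable set of size $r$ of $G_f$), ${\rm dom}\ f$ is M-convex if and only if every connected component of the subgraph induced by $X$ is a clique (Lemmas~\ref{lem:i,j in domf} and~\ref{lem:dom f}). The restriction to $X$ is essential and easy to miss: coordinates lying in no size-$r$ stable set are invisible to the exchange axiom, and deciding whether such coordinates exist (Condition~A) is itself NP-complete, so the naive version of the criterion stated over all of $[n]$ is false. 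Second, a concrete reduction showing that this induced-clique-components question (problem~(P)) is co-NP-hard: the paper pads the input graph $G$ with $k$ new vertices, pairwise nonadjacent but joined to every old vertex, so that a stable set of size $r=k$ always exists and the criterion fails exactly when $G$ has a stable set of size at least $k$ (Lemma~\ref{lem:(P)}). Without (i) the reduction of M-convexity of $f$ to M-convexity of ${\rm dom}\ f$, (ii) the clique-component characterization restricted to $X$, and (iii) the padding gadget with its two-direction verification, your asserted equivalence between M-convexity and no-instances of the source problem remains a conjecture, and the hardness half of the theorem is not proved.
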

We also prove a positive result under the following natural condition.
\begin{description}
	\item[Condition A:] For any $i \in [n]$,
	there exists $x \in \textrm{dom }f$ with $x_i = 1$.
\end{description}
\begin{thm}\label{thm:poly}
	If {\rm Condition A} holds,
	{\rm QMCTP} is solvable in $O(n^2)$ time.
\end{thm}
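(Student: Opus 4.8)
The plan is to reduce QMCTP, under Condition~A, to a few combinatorial tests that can all be read off from the coefficients in $O(n^2)$ time. I identify each point with its support, so that $\mathrm{dom}\,f$ becomes the family $\mathcal B$ of those $r$-subsets of $[n]$ all of whose internal pairs have finite $a_{ij}$; equivalently, writing $H$ for the ``forbidden'' graph on $[n]$ with edge set $\{\,ij : a_{ij}=+\infty\,\}$, $\mathcal B$ is the family of $r$-element independent sets of $H$. Since the effective domain of an M-convex function is always an M-convex set (a matroid base family), the first step is a structural lemma: under Condition~A and $2\le r\le n-2$, the family $\mathcal B$ is a matroid base family if and only if $H$ is a disjoint union of cliques, i.e. has no induced path on three vertices. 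The ``if'' direction is immediate, since such an $H$ partitions $[n]$ into groups $V_1,\dots,V_k$ and $\mathcal B$ is then the base family of the rank-$r$ truncation of the corresponding partition matroid. For ``only if'' I would argue contrapositively: an induced path $a$--$b$--$c$ with $a_{ab}=a_{bc}=+\infty$ and $a_{ac}$ finite yields, using Condition~A and $r\le n-2$ to supply $r-2$ further compatible elements, two bases whose exchange fails, just as in the $n=4$ instance one verifies by hand. Testing this and extracting $V_1,\dots,V_k$ costs $O(n^2)$; if it fails, $f$ is not M-convex.

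Assume henceforth that $H$ is a disjoint union of cliques with groups $V_1,\dots,V_k$, $k\ge r$. The key reduction is contraction of parallel classes: two elements of one group are parallel in the matroid, and I would show that $f$ is M-convex essentially iff (i) a parallel-consistency condition holds, namely for each group $V_t$ the difference $a_{ij}-a_{i'j}$ is independent of $j\in[n]\setminus V_t$ for all $i,i'\in V_t$, and (ii) the contracted function on the uniform matroid $U_{r,k}$ obtained by picking one representative per group is M-convex. Both are obtained by specializing the exchange axiom to pairs $x=B\cup\{i,k\}$, $y=B\cup\{j,l\}$ at Hamming distance four: the common part $B$ cancels, leaving $a_{ik}+a_{jl}\ge\min\{a_{il}+a_{jk},\,a_{ij}+a_{kl}\}$, in which any pairing that places two same-group elements together is unavailable (its value is $+\infty$). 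Sorting these feasible inequalities by how $\{i,j,k,l\}$ meets the groups produces exactly (i), when two of the four elements are parallel, and the four-point inequalities among representatives, when all four lie in distinct groups. Condition~(i) is checkable in $O(n^2)$ by fixing a reference element in each group and comparing differences.

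For step (ii) I would invoke Theorem~\ref{thm:poly finite} on the contracted instance, all of whose coefficients are finite. When $k\ge r+2$ the hypothesis $2\le r\le k-2$ is met, so M-convexity of the contraction is equivalent to the four-point condition~(\ref{eq:anti-tree metric}) on the $k\times k$ table of representative values; since this condition is invariant under the additive row/column shifts induced by (i), it faithfully represents all cross-group quadruples, and it is an (anti-)tree-metric recognition problem solvable in $O(k^2)=O(n^2)$ time rather than the naive $O(k^4)$. When $k\in\{r,r+1\}$ the contracted rank satisfies $r\ge k-1$, so $U_{r,k}$ carries only a single base or a $(k-1)$-slice, on which every quadratic function is linear and hence M-convex; thus (ii) imposes nothing beyond (i). This is precisely what reconciles the example~(\ref{eq:example}): there $k=r+1=4$, so~(\ref{eq:anti-tree metric}) need not hold, and only parallel-consistency of the group $\{1,5\}$ is required, which indeed holds.

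The main obstacle I anticipate is the tight regime $k\in\{r,r+1\}$. There the distance-four exchanges never span four distinct groups, so the finite characterization does not apply directly, and I must (a) enumerate exactly which local inequalities survive as feasible exchanges, verifying in particular that no condition stronger than parallel-consistency can be forced and that parallel-consistency itself is demanded only between groups that some feasible exchange can actually detect, and (b) confirm that these exchanges never fail outright, since having no feasible swap target would force $x$ or $y$ out of the domain, so the reduction to Theorem~\ref{thm:poly finite} is exact. Carrying out (a)--(b) carefully, together with the induced-path obstruction in the structural lemma, is where the real work lies; granting them, the three tests each run in $O(n^2)$ and establish Theorem~\ref{thm:poly}.
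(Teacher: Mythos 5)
Your overall architecture is the right one and largely parallels the paper's: reduce to the local four-point exchange (the paper's Lemma~\ref{lem:quadratic M-conv.}), show that $\mathrm{dom}\,f$ is a matroid base family iff the $+\infty$-graph is a disjoint union of cliques (the paper's Lemma~\ref{lem:dom f}, with Condition~A), split by the number $k$ of groups relative to $r$ (the paper's Types I/II/III), and for $k\ge r+2$ factor out per-element shifts and run a finite tree-metric recognition on one representative per group in $O(n^2)$ (the shift-invariance argument you give is correct, and the paper's Remark~4.4 confirms the finite-case reduction you invoke; the paper instead checks the full table with infinities via an anti-ultrametric shift and a laminar decomposition). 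Your Type~II claim also matches the paper exactly: for $k=r+1$, M-convexity is equivalent to full parallel-consistency, i.e.\ condition~(\ref{eq:Type II}), and the four-distinct-group inequalities are vacuous. One smaller gloss: in your structural lemma, getting a base through both endpoints of an induced path $a$--$b$--$c$ is not just ``supplying $r-2$ compatible elements''; it needs the minimal-distance exchange argument of the paper's Lemma~\ref{lem:i,j in domf}.

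The genuine gap is the tight regime $k=r$ (Type~III), where your stated biconditional is false: full parallel-consistency (i) is strictly stronger than M-convexity there. When $k=r$, every base contains exactly one element of each group; in particular every isolated vertex lies in \emph{every} base, and a feasible pair $z+\chi_i+\chi_j,\,z+\chi_k+\chi_l\in\mathrm{dom}\,f$ forces $B_i\cup B_j=B_k\cup B_l$. Hence the only testable quadruples have $i,k\in A_p$ and $j,l\in A_q$ for \emph{distinct non-singleton} groups $p,q$, which is exactly condition~(\ref{eq:Type III}); equalities with $j,l$ in different groups, or involving vertices of singleton groups, are never forced. Concretely, take $n=5$, $r=3$, $a_{12}=a_{34}=+\infty$, $a_{15}=100$, all other coefficients $0$: then $f(x)=100\,x_1$ on $\mathrm{dom}\,f$, so $f$ is linear on its domain and M-convex (and satisfies~(\ref{eq:Type III})), yet $a_{13}-a_{23}=0\neq 100=a_{15}-a_{25}$, so your condition (i) and your reference-element test both wrongly reject it. Your caveat (a) names precisely this worry, but resolving it is not a technicality to be ``carried out carefully''---it is the case analysis constituting the paper's Theorem~\ref{thm:Type}, and it shows $k=r$ and $k=r+1$ genuinely differ ($k=r+1$ demands full parallel-consistency, $k=r$ only the within-one-other-group version). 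Until you prove that trichotomy of feasible exchanges and adjust the $O(n^2)$ test for Type~III accordingly (e.g.\ per ordered pair of non-singleton groups, as in the paper's Algorithm~III), the proposal does not establish Theorem~\ref{thm:poly}.
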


Note that checking Condition~A is an NP-complete problem,
since it is almost equivalent to checking the existence of a stable set of size $r- 1$.

The remainder of this paper is organized as follows.
In Section~\ref{sec:co-NP comp},
we prove Theorem~\ref{thm:co-NP comp}
and introduce three types of functions.
In Section~\ref{sec:chara},
we present a characterization of M-convexity under Condition~A for three types.
This characterization implies Theorem~\ref{thm:poly finite}.
In Section~\ref{sec:testing},
we propose an $O(n^2)$-time algorithm for each type of QMCTP,
and prove the validity of these algorithms.
Thus, we show Theorem~\ref{thm:poly}.

\section{Co-NP-Completeness of QMCTP}\label{sec:co-NP comp}
In this section,
we show the co-NP-completeness of QMCTP in the general case.
In order to show Theorem~\ref{thm:co-NP comp},
we prepare some lemmas.

In the terminology of discrete convex analysis, a set $X \subseteq \{0,1\}^n$ is said to be {\it M-convex} if
for $x,y \in X$ and $i \in \textrm{supp}(x) \setminus \textrm{supp}(y)$, there exists $j \in {\rm supp}(y) \setminus \textrm{supp}(x)$ such that $x - \chi_i + \chi_j, y + \chi_i - \chi_j \in X$.
That is, an M-convex set $X$ is nothing but the base family of some matroid
if we identify a 0-1 vector with a subset of $[n]$.
Note that if $f$ is M-convex, then $\textrm{dom }f$ is M-convex.
\begin{lem}\label{lem:i,j in domf}
	Suppose that $f$ is a function of the form~{\rm (\ref{eq:f})} such that ${\rm dom}\ f$ is M-convex.
	For some distinct $i,j \in [n]$, assume that there exist $x, y \in {\rm dom}\ f$ with $x_i = 1$ and $y_j = 1$.
	Then, if $a_{ij} < +\infty$,
	there exists $z \in {\rm dom}\ f$ with $z_i = z_j = 1$.
\end{lem}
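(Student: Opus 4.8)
The plan is to combine the explicit description of $\textrm{dom}\ f$ with the exchange axiom for the M-convex set $\textrm{dom}\ f$, using the latter only to manoeuvre $x$ and $y$ into a very special relative position, and then to finish by a direct combinatorial construction that uses the hypothesis $a_{ij}<+\infty$. First I would record the structural fact that a vector $w \in \{0,1\}^n$ lies in $\textrm{dom}\ f$ if and only if $\sum_k w_k = r$ and $\textrm{supp}(w)$ contains no pair $\{k,l\}$ with $a_{kl}=+\infty$. Thus, letting $G$ be the graph on $[n]$ whose edges are the pairs $\{k,l\}$ with $a_{kl}=+\infty$, the set $\textrm{dom}\ f$ is exactly the family of stable sets of $G$ of size $r$, and the assumption $a_{ij}<+\infty$ says precisely that $\{i,j\}$ is not an edge of $G$.

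Next comes the base case together with the reduction step, which is the heart of the argument. If $x_j=1$ then $z:=x$ already satisfies $z_i=z_j=1$, and symmetrically if $y_i=1$ then $z:=y$ works; so I may assume any pair under consideration has $i \in \textrm{supp}(x)\setminus\textrm{supp}(y)$ and $j \in \textrm{supp}(y)\setminus\textrm{supp}(x)$. Among all pairs $(x',y')$ of elements of $\textrm{dom}\ f$ with $i \in \textrm{supp}(x')$ and $j \in \textrm{supp}(y')$, I would choose one minimizing $|\textrm{supp}(x')\setminus\textrm{supp}(y')|$, relabel it $(x,y)$, and claim that then $\textrm{supp}(x)\setminus\textrm{supp}(y)=\{i\}$. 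Indeed, if some $p \in (\textrm{supp}(x)\setminus\textrm{supp}(y))\setminus\{i\}$ existed, applying the exchange axiom for $\textrm{dom}\ f$ to $x$, $y$, and this $p$ yields $q \in \textrm{supp}(y)\setminus\textrm{supp}(x)$ with $x-\chi_p+\chi_q,\ y+\chi_p-\chi_q \in \textrm{dom}\ f$; the pair $(x-\chi_p+\chi_q,\ y)$ is still admissible (it keeps $i$ on the first coordinate since $p\neq i$, and $j$ on the second) but has strictly smaller difference, contradicting minimality. Since $|\textrm{supp}(x)|=|\textrm{supp}(y)|=r$, this pins the configuration down to $\textrm{supp}(y)=(\textrm{supp}(x)\setminus\{i\})\cup\{j\}$, i.e. $y=x-\chi_i+\chi_j$.

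The finishing step uses the structural description once more. Writing $A:=\textrm{supp}(x)$, both $A$ and $(A\setminus\{i\})\cup\{j\}$ are stable sets of $G$, so no element of $A\setminus\{i\}$ is adjacent in $G$ to $i$ or to $j$, no two elements of $A\setminus\{i\}$ are adjacent, and $\{i,j\}$ is not an edge by $a_{ij}<+\infty$. Hence $A\cup\{j\}=(A\setminus\{i\})\cup\{i,j\}$ is a stable set of size $r+1$. Deleting any single element $s\in A\setminus\{i\}$ (possible since $|A\setminus\{i\}|=r-1\geq 1$) leaves a stable set of size $r$ containing both $i$ and $j$, whose indicator vector is the desired $z\in\textrm{dom}\ f$.

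The main obstacle is the reduction step: arranging that each exchange strictly decreases the symmetric difference while keeping $i$ on the $x$-side and $j$ on the $y$-side. The subtlety is that the exchange partner $q$ is not under our control, so a naive single application could be forced to delete $j$ itself; choosing a pair of minimum difference and exchanging an element $p\neq i$ is exactly the device that circumvents this and reduces everything to the case $y=x-\chi_i+\chi_j$, after which the finiteness assumption $a_{ij}<+\infty$ is precisely what makes the final enlargement $A\cup\{j\}$ feasible.
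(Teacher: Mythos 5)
Your proof is correct and takes essentially the same route as the paper's: pick a pair $x,y \in \mathrm{dom}\,f$ with $x_i = y_j = 1$ minimizing $|\mathrm{supp}(x) \setminus \mathrm{supp}(y)|$, apply the exchange property of the M-convex set $\mathrm{dom}\,f$ to some $p \neq i$ to contradict minimality, and in the residual case $y = x - \chi_i + \chi_j$ construct $z$ by swapping an element of $\mathrm{supp}(x)\setminus\{i\}$ for $j$, which is feasible exactly because $a_{ij} < +\infty$ and all other relevant coefficients are finite since $x,y \in \mathrm{dom}\,f$. The only cosmetic differences are that you halt the descent at symmetric difference one and build $z$ there, whereas the paper runs the same construction inside a contradiction argument to force the minimum difference to zero, and that you make explicit the stable-set description of $\mathrm{dom}\,f$ that the paper uses implicitly.
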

\begin{proof}
	Take $x, y \in \textrm{dom }f$ with $|\textrm{supp}(x) \setminus \textrm{supp}(y)|$ minimum satisfying $x_i = y_j = 1$.
	It suffices to show $|\textrm{supp}(x) \setminus \textrm{supp}(y)| = 0$.
	Suppose, to the contrary, that $|\textrm{supp}(x) \setminus \textrm{supp}(y)| > 0$.
	First we assume $|\textrm{supp}(x) \setminus \textrm{supp}(y)| \geq 2$.
	Then there exists $i' \neq i$ such that $i' \in \textrm{supp}(x) \setminus \textrm{supp}(y)$.
	By the M-convexity of $\textrm{dom }f$ for $x$, $y$, and $i'$,
	there exists $j' \in \textrm{supp}(y) \setminus \textrm{supp}(x)$ such that $x - \chi_{i'} + \chi_{j'} \in \textrm{dom }f$.
	If $j' = j$, then $x' := x - \chi_{i'} + \chi_{j}$ satisfies $x_i' = x_j' = 1$, a contradiction.
	If $j' \neq j$, then $x' := x - \chi_{i'} + \chi_{j'}$ satisfies
	$x'_i = y_j = 1$ and $|\textrm{supp}(x') \setminus \textrm{supp}(y)| < |\textrm{supp}(x) \setminus \textrm{supp}(y)|$.
	This is also a contradiction to the choice of $x$ and $y$.
	Hence we have $|\textrm{supp}(x) \setminus \textrm{supp}(y)| = 1 = |\textrm{supp}(y) \setminus \textrm{supp}(x)|$.
	
	Since $x,y \in \textrm{dom }f$, it holds that $a_{kl}, a_{ik}, a_{jk} < +\infty$ for any $k,l \in \textrm{supp}(x - \chi_i) (= \textrm{supp}(y - \chi_j))$.
	Moreover, we have $a_{ij} < +\infty$ by the assumption.
	Hence we obtain $z := x - \chi_k + \chi_j \in \textrm{dom }f$ for $k \in \textrm{supp}(x - \chi_i)$.
	Hence $z$ satisfies $z_i = z_j = 1$, a contradiction.
	Thus, we have $|\textrm{supp}(x) \setminus \textrm{supp}(y)| = 0$.
\end{proof}

For a function $f$ of the form~(\ref{eq:f}),
we define an undirected graph $G_{f} = ([n], E_{f})$
by $E_{f} := \{ \{i,j\} \mid i,j \in [n],\ i \neq j,\ a_{ij} = +\infty \}$.
Notice that Condition~A holds if and only if,
for each $i \in [n]$,
there is a stable set in $G_f$ of size $r$ containing $i$.

\begin{lem}\label{lem:dom f}
	Suppose that {\rm Condition A} holds.
	Then ${\rm dom}\ f$ is an M-convex set if and only if
	each connected component of $G_{f}$ is a complete graph.
\end{lem}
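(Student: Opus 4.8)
The plan is to recast the statement in matroid-theoretic language and then handle the two directions separately. Observe first that $x \in \textrm{dom }f$ holds precisely when $\textrm{supp}(x)$ is a stable set of $G_f$ of size $r$: the constraint $\sum_i x_i = r$ fixes the size, and $f(x)$ is finite exactly when no pair $\{i,j\} \subseteq \textrm{supp}(x)$ has $a_{ij} = +\infty$, i.e.\ when $\textrm{supp}(x)$ spans no edge of $G_f$. Moreover, \emph{every connected component of $G_f$ is complete} is equivalent to $G_f$ being a disjoint union of cliques, i.e.\ to $G_f$ containing no induced path on three vertices. So the task is to show that the family of size-$r$ stable sets of $G_f$ is a matroid base family if and only if $G_f$ has no induced $P_3$.

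For the ``if'' direction, suppose the components $C_1,\dots,C_p$ of $G_f$ are all cliques. A set is then stable if and only if it meets each $C_t$ in at most one vertex, so the stable sets are exactly the independent sets of the partition matroid on $[n]$ with blocks $C_1,\dots,C_p$ and all capacities equal to one. The size-$r$ stable sets are therefore the bases of the rank-$r$ truncation of this partition matroid (nonempty since $\textrm{dom }f \neq \emptyset$ forces $r \le p$), and the base family of any matroid is M-convex; hence $\textrm{dom }f$ is M-convex. Alternatively one can verify the exchange axiom directly: given stable $r$-sets $\textrm{supp}(x),\textrm{supp}(y)$ and $i \in \textrm{supp}(x)\setminus\textrm{supp}(y)$ in block $C_t$, take $j$ to be the element of $\textrm{supp}(y)\cap C_t$ when this set is nonempty, and otherwise the element of $\textrm{supp}(y)$ lying in some block missed by $\textrm{supp}(x)$ (such a block exists because $|\textrm{supp}(x)|=|\textrm{supp}(y)|$ and $i \in \textrm{supp}(x)\setminus\textrm{supp}(y)$); a short case check shows both $\textrm{supp}(x)-i+j$ and $\textrm{supp}(y)+i-j$ remain stable.

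For the ``only if'' direction I argue by contradiction: assume $\textrm{dom }f$ is M-convex but some component of $G_f$ is not complete, so $G_f$ has an induced $P_3$, say on vertices $i,k,j$ with $a_{ik}=a_{kj}=+\infty$ and $a_{ij}<+\infty$. By Condition~A there are points of $\textrm{dom }f$ whose supports contain $i$ and $j$ respectively, so Lemma~\ref{lem:i,j in domf} (applicable since $\textrm{dom }f$ is M-convex and $a_{ij}<+\infty$) yields $y \in \textrm{dom }f$ with $y_i=y_j=1$; Condition~A also gives $x \in \textrm{dom }f$ with $x_k=1$. Because $k$ is adjacent to both $i$ and $j$ in $G_f$, stability of $\textrm{supp}(x)$ forces $i,j \notin \textrm{supp}(x)$ and stability of $\textrm{supp}(y)$ forces $k \notin \textrm{supp}(y)$; in particular $k \in \textrm{supp}(x)\setminus\textrm{supp}(y)$. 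Applying the exchange axiom to $x$, $y$, and the index $k$ would produce $q \in \textrm{supp}(y)\setminus\textrm{supp}(x)$ with $y+\chi_k-\chi_q \in \textrm{dom }f$. But the support of $y+\chi_k-\chi_q$ contains $k$ together with at least one of $i,j$, since removing the single index $q$ cannot delete both; as $\{i,k\},\{j,k\}\in E_f$, this support is not stable, a contradiction. Hence no induced $P_3$ exists and every component of $G_f$ is complete.

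The substantive step is the ``only if'' direction, and I expect the main obstacle to be pinned down by a single idea: choosing the removed index to be $k$ itself and exploiting the two forbidden edges $\{i,k\}$ and $\{j,k\}$ at once, so that every candidate exchange $q$ fails on the $y$-side simultaneously. The only other nontrivial input is the extraction of a single base $y$ containing both $i$ and $j$, which is exactly what Lemma~\ref{lem:i,j in domf} supplies; the ``if'' direction is then routine once stable sets are identified with partition-matroid independents.
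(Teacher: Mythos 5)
Your proof is correct and takes essentially the same route as the paper: the ``if'' direction via the partition-matroid (truncation) identification, and the ``only if'' direction via Lemma~\ref{lem:i,j in domf} to obtain a point of ${\rm dom}\ f$ containing both endpoints of an induced $P_3$, followed by a failing exchange. The only difference is cosmetic and mildly cleaner: you exchange the \emph{middle} vertex $k$ out of $x$, so every candidate $q$ fails uniformly on the $y$-side, whereas the paper exchanges an endpoint out of the base containing both endpoints and must split into the two cases $j'=j$ and $j'\neq j$.
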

\begin{proof}
	(if part).
	Let $A_1, A_2, \dots, A_m$ be the connected components of $G_{f}$.
	Then $\textrm{dom }f$ is represented by $\textrm{dom }f = \{ x \in \{0,1\}^n \mid \sum_i x_i = r,\ |\textrm{supp}(x) \cap A_p| \leq 1 \text{ for all }p \in [m] \}$.
	Hence $\textrm{dom }f$ can be regarded as the base family of a partition matroid.
	This implies that $\textrm{dom }f$ is M-convex.
	
	(only-if part).
	We prove the contrapositive.
	Suppose that some connected component of $G_{f}$ is not complete.
	That is, there exist distinct $i,j,k \in [n]$ such that $\{i,j\}, \{j,k\} \in E_{f}$ and $\{i,k\} \not\in E_{f}$.
	By Condition~A, $a_{ik} < +\infty$, and Lemma~\ref{lem:i,j in domf},
	there exists $x \in \textrm{dom }f$ with $x_i = x_k = 1$.
	
	Take any $x,y \in \textrm{dom }f$ with $x_i = x_k = 1$ and $y_j = 1$.
	Since $a_{ij} = a_{jk} = +\infty$, we have $\textrm{supp}(x) \setminus \textrm{supp}(y) \supseteq \{i,k\}$.
	Then for all $j' \in \textrm{supp}(y) \setminus \textrm{supp}(x)$,
	it holds that $x - \chi_i + \chi_{j'} \not\in \textrm{dom }f$ or $y + \chi_i - \chi_{j'} \not\in \textrm{dom }f$.
	Indeed,
	if $j' = j$, then $x - \chi_i + \chi_j \not\in \textrm{dom }f$ holds from $a_{kj} = +\infty$,
	and if $j' \neq j$, then $y + \chi_i - \chi_{j'} \not\in \textrm{dom }f$ holds from $a_{ij} = +\infty$.
	This implies that $\textrm{dom }f$ is not M-convex.
\end{proof}

Here we consider the following problem (P),
which is the problem for testing the M-convexity of $\textrm{dom }f$:
\begin{description}
	\item[Given:] A graph $G = (V, E)$ having a stable set of cardinality $r$.
	\item[Question:] Let $T := \bigcup \{ S \subseteq V \mid S \text{ is a stable set of } G \text{ with } |S| = r \}$.
	Is each connected component of the subgraph of $G$ induced by $T$ a complete graph?
\end{description}

\begin{lem}\label{lem:(P)}
	The problem {\rm (P)} is co-NP-complete.
\end{lem}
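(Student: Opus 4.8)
The plan is to prove both that (P) lies in co-NP and that it is co-NP-hard.

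For membership in co-NP I would certify the complementary (``no'') instances, namely those for which some connected component of $G[T]$ fails to be complete. Such a component contains two non-adjacent vertices $u,w \in T$ joined by a path inside $G[T]$. A polynomial certificate therefore consists of $u,w$ with $\{u,w\} \notin E$, a path $u = v_0, v_1, \dots, v_\ell = w$ in $G$, and, for each $v_t$ on the path, a stable set $S_t \subseteq V$ with $|S_t| = r$ and $v_t \in S_t$. A verifier checks in polynomial time that each $S_t$ is stable of size $r$ (hence $v_t \in T$), that consecutive $v_t$ are adjacent, and that $\{u,w\} \notin E$. Thus the complement of (P) is in NP, so (P) is in co-NP.

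For co-NP-hardness I would reduce from the Stable Set problem, which is NP-complete: given a graph $H = (V_H, E_H)$ and an integer $k$ with $2 \le k \le |V_H|$, decide whether $H$ has a stable set of size $k$. From $(H,k)$ I construct an instance of (P) as follows. Put $P := \{p_1, \dots, p_{k-1}\}$ and take the vertex set $V := V_H \cup \{a\} \cup P$, where $a, p_1, \dots, p_{k-1}$ are new vertices. The edges of $G$ are: all edges of $H$ among $V_H$; every edge joining a vertex of $\{a\} \cup P$ to a vertex of $V_H$ (a complete join); and no edge inside $\{a\} \cup P$. Finally set $r := k$. Since $\{a\} \cup P$ is a stable set of size $k = r$, the instance is valid, and the reduction is plainly polynomial.

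Correctness rests on one structural observation: because every vertex of $\{a\} \cup P$ is adjacent to all of $V_H$, every stable set of $G$ is contained either in $\{a\} \cup P$ or in $V_H$. Consequently the only stable set of size $r = k$ meeting $\{a\} \cup P$ is $\{a\} \cup P$ itself, while the size-$k$ stable sets contained in $V_H$ are exactly the size-$k$ stable sets of $H$. Hence $T = (\{a\}\cup P) \cup T_H$, where $T_H := T \cap V_H$ is the union of all size-$k$ stable sets of $H$ and is nonempty if and only if $H$ has a stable set of size $k$. I would then split into two cases. If $H$ has no stable set of size $k$, then $T = \{a\} \cup P$, whose induced subgraph is edgeless, so every component of $G[T]$ is a single vertex and the answer to (P) is ``yes''. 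If $H$ has a stable set of size $k$, pick $v \in T_H$; since $v$ is adjacent to both $a$ and $p_1$ while $a$ and $p_1$ are non-adjacent, the vertices $a, v, p_1$ lie in one component of $G[T]$ that is not complete, so the answer is ``no''. Therefore $H$ has a stable set of size $k$ if and only if the constructed instance is a ``no''-instance of (P); this shows the complement of (P) is NP-hard, hence (P) is co-NP-hard, and together with the co-NP upper bound gives co-NP-completeness.

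The main obstacle is pinning down $T$ exactly: one must guarantee that in the negative case no unintended size-$r$ stable set pulls an extra vertex into $T$, and that in the positive case the witnessing pair $a, p_1$ really ends up in a common, non-complete component. The complete join of $\{a\} \cup P$ to $V_H$ is exactly the device that forbids ``mixed'' stable sets and thereby controls $T$; verifying this non-mixing property, together with the harmless edge cases such as $k = 2$, is the crux, whereas the co-NP membership argument is routine.
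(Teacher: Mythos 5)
Your proof is correct and uses essentially the same construction as the paper: a fresh stable set of $k$ new vertices (your $\{a\}\cup P$, the paper's $V_m$) completely joined to the original vertex set, with $r := k$, so that mixed stable sets are forbidden and the instance is a ``no''-instance of (P) exactly when $H$ has a stable set of size $k$. Your packaging as a single many-one reduction from the decision version (with the harmless restriction $k \geq 2$, and with an explicit path-plus-witnessing-stable-sets certificate for co-NP membership) is in fact slightly cleaner than the paper's presentation, which iterates over $k = |V|, |V|-1, \dots, 1$ to recover the maximum stable set size, but the underlying argument is identical.
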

\begin{proof}
	It is clear that the problem (P) is in co-NP.
	We show the co-NP-hardness of (P) by reduction from the stable set problem,
	which is an NP-complete problem:
	Given $G = (V, E)$ and a positive integer $k \leq |V|$,
	we determine whether $G$ contains a stable set of size at least $k$.
	For a given graph $G = (V,E)$ and a positive integer $m$, define $G_m := (V \cup V_m, E \cup E_m)$ by $|V_m| = m$, $V_m \cap V = \emptyset$, and $E_m := \{ \{i,j\} \mid i \in V,\ j \in V_m \}$.
	Let $T_m := \bigcup\{ S \subseteq V \cup V_m \mid S \text{ is a stable set of } G_m \text{ with } |S| = m \}$.
	Since $V_m$ is a stable set of $G_m$ satisfying $|V_m| = m$,
	we have $T_m \supseteq V_m$.
	If $T_m \supsetneq V_m$,
	the subgraph of $G_m$ induced by $T_m$ is not complete by the definition of $E_m$.
	Hence each connected component of the subgraph of $G_m$ induced by $T_m$ is complete if and only if
	$G$ does not have a stable set of cardinality at least $m$.
	Therefore we have the cardinality of a maximum stable set of $G$ by solving (P) for $G_k$ ($k = |V|, |V|-1, \dots, 1$).
	Indeed, the first $k$ such that we output ``no'' by solving (P) for $G_k$ is equal to the cardinality of a maximum stable set.
	Since the maximum stable set problem has a polynomial-time reduction to the complement of (P),
	(P) is co-NP-hard.
\end{proof}

We are now ready to prove Theorem~\ref{thm:co-NP comp}.
\begin{proof}[Proof of Theorem~\ref{thm:co-NP comp}]
	It is clear that QMCTP is in co-NP.
	We show the co-NP-hardness of QMCTP by reduction from the problem (P).
	Let $G = ([n], E)$ be a graph having a stable set of cardinality $r$.
	We define $f_G$ by
	\begin{align*}
	f_G(x_1,x_2,\dots,x_n) :=
	\begin{cases}
	\displaystyle
	\sum_{1 \leq i < j \leq n}a_{ij} x_i x_j & \text{if $\displaystyle\sum_{i \in [n]} x_i = r$},\\
	+\infty & \text{otherwise},
	\end{cases}
	\end{align*}
	where $a_{ij} := +\infty$ for $\{i,j\} \in E$ and $a_{ij} := 0$ for $\{i,j\} \not\in E$.
	Note that $x \in \textrm{dom }f$ if and only if $\textrm{supp}(x)$ is a stable set of $G$.
	We have $\textrm{dom }f_G \neq \emptyset$
	by the assumption that $G$ has a stable set of cardinality $r$.
	We define $X$ by $X := \bigcup \{ S \subseteq [n] \mid S \text{ is a stable set of } G \text{ of size } r \}$.
	Then there exists $x \in \textrm{dom }f_G$ with $x_i = 1$ if and only if $i \in X$.
	For $x \in \{0,1\}^X$,
	define $\tilde{x} \in \{0,1\}^n$ by $\tilde{x}_i := x_i$ if $i \in X$ and $\tilde{x}_i := 0$ if $i \in [n] \setminus X$.
	Moreover define $f_G|_X(x) := f_G(\tilde{x})$ for $x \in \{0,1\}^X$.
	By the definition of $X$,
	$f_G$ is M-convex (i.e., $\textrm{dom }f_G$ is M-convex) if and only if
	$f_G|_X$ is M-convex (i.e., $\textrm{dom }f_G|_X$ is M-convex).
	Furthermore, by Lemma~\ref{lem:dom f}, $f_G|_X$ is M-convex if and only if each connected component of the subgraph of $G$ induced by $X$ is complete.
	This means that we can solve (P) by solving QMCTP for $f_G$.
\end{proof}

\section{Characterization of Quadratic M-Convexity}\label{sec:chara}
In this section, we present a characterization of M-convexity under Condition A, which implies Theorem~\ref{thm:poly finite}.
By Lemma~\ref{lem:dom f},
we see that the following Condition B is necessary for the M-convexity.
\begin{description}
	\item[Condition B:] Each connected component of $G_{f}$ is a complete graph.
\end{description}
Therefore, in this section,
we can assume that a function $f$ of the form~(\ref{eq:f}) satisfies Conditions~A and B.
Let $A_1, A_2, \dots, A_m$ be the vertex sets of the connected components of $G_{f}$ of size at least two,
and define $A_0 := [n] \setminus \bigcup_{p = 1}^m A_p$,
which denotes the set of isolated vertices.
Then we classify the types of $f$ as follows.
\begin{description}
	\item[Type I:] $|A_0| + m \geq r+2$.
	\item[Type II:] $|A_0| + m = r+1$.
	\item[Type III:] $|A_0| + m = r$.
\end{description}
If $|A_0| + m < r$,
then we have $\textrm{dom }f = \emptyset$.
Hence we exclude this case.

\begin{thm}\label{thm:Type}
	Suppose that a function $f$ of the form~{\rm (\ref{eq:f})} satisfies {\rm Conditions~A} and~{\rm B}.
	Then the following hold.
	\begin{description}
		\item[(I):] $f$ of Type~{\rm I} is M-convex if and only if it holds that
		\begin{align}\label{eq:Type I}
		a_{ij} + a_{kl} \geq \min \{ a_{ik}+a_{jl}, a_{il}+a_{jk} \}
		\end{align}
		for every distinct $i,j,k,l \in [n]$.
		\item[(II):] $f$ of Type~{\rm II} is M-convex if and only if it holds that
		\begin{align}\label{eq:Type II}
		a_{ij} + a_{kl} = a_{il} + a_{jk}
		\end{align}
		for every $p \in [m]$, distinct $i,k \in A_p$, and distinct $j,l \in [n] \setminus A_p$.
		\item[(III):] $f$ of Type~{\rm III} is M-convex if and only if it holds that
		\begin{align}\label{eq:Type III}
		a_{ij} + a_{kl} = a_{il} + a_{jk}
		\end{align}
		for every distinct $p,q \in [m]$, distinct $i,k \in A_p$, and distinct $j,l \in A_q$.
	\end{description}
Moreover, if $f$ is an M-convex function of Type~{\rm II} or~{\rm III},
then $f$ is a linear function on ${\rm dom\ }f$,
that is,
there exist $p_i \in \mathbf{R}$ for each $i \in [n]$ and $\alpha \in \mathbf{R}$ satisfying
$f(x) = \sum_{i} p_i x_i + \alpha$ for any $x = (x_1, x_2, \dots, x_n) \in {\rm dom}\ f$.
\end{thm}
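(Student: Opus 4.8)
The plan is to reduce the M-convexity of $f$ to a local (length-two) exchange condition, evaluate that condition for a quadratic $f$, and then decide type by type which local exchanges are actually realizable inside the partition-matroid structure of $\textrm{dom}\,f$; the equalities and the linearity in Types~II and~III will come out of the interplay between this condition and the infinite coefficients. Since Conditions~A and~B hold, Lemma~\ref{lem:dom f} identifies $\textrm{dom}\,f$ with the base family of the partition matroid whose parts are $A_1,\dots,A_m$ together with the singletons in $A_0$; in particular $\textrm{dom}\,f$ is an M-convex set and every element lies in a base. By the local exchange characterization of M-convexity~\cite{book/Murota03}, it then suffices to test the Exchange Axiom on pairs $x,y\in\textrm{dom}\,f$ with $|\textrm{supp}(x)\setminus\textrm{supp}(y)|=2$. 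Writing $\textrm{supp}(x)\setminus\textrm{supp}(y)=\{i,j\}$ and $\textrm{supp}(y)\setminus\textrm{supp}(x)=\{k,l\}$ and substituting into~(\ref{eq:f}), the linear part and every quadratic term with an endpoint in $\textrm{supp}(x)\cap\textrm{supp}(y)$ cancel, so the axiom for $(x,y)$ becomes exactly $a_{ij}+a_{kl}\ge\min\{a_{ik}+a_{jl},\,a_{il}+a_{jk}\}$, i.e.\ inequality~(\ref{eq:Type I}) for $i,j,k,l$, independently of $\textrm{supp}(x)\cap\textrm{supp}(y)$. Hence $f$ is M-convex iff~(\ref{eq:Type I}) holds for every $4$-tuple realized by an admissible pair.

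It remains to identify the realizable tuples. Admissibility forces $i,j$ into distinct parts and $k,l$ into distinct parts, so $a_{ij},a_{kl}<+\infty$; conversely a common part of the required size $r-2$ can be completed exactly when enough parts remain. In Type~I ($|A_0|+m\ge r+2$) there is always room, so~(\ref{eq:Type I}) is required for all tuples with finite left-hand side (the rest holding vacuously), which is~(I); this specializes to Theorem~\ref{thm:poly finite} when all $a_{ij}$ are finite. In Types~II and~III the scarcity of parts forces every realizable exchange to \emph{pivot} on two elements $i,k$ of a common big part $A_p$, one on each side of the exchange: then $a_{ik}=+\infty$ eliminates one branch of the minimum, the axiom becomes $a_{ij}+a_{kl}\ge a_{il}+a_{jk}$, and realizing the same exchange with $j$ and $l$ interchanged yields the reverse inequality, hence equality. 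In Type~III every base is a full transversal, so the exchange pivots on two big parts simultaneously, giving~(\ref{eq:Type III}); in Type~II the omitted part supplies a free second slot, giving~(\ref{eq:Type II}). The converse implications are immediate. The careful part here is the combinatorial case analysis showing that $A_p$-pivoted exchanges are the \emph{only} realizable ones and that both the forward and the reversed exchange are admissible.

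For the linearity claim, note that~(\ref{eq:Type II}) and~(\ref{eq:Type III}) say precisely that the crossing coefficients decompose additively: for $u,v\in A_p$ the difference $a_{uw}-a_{vw}$ is independent of $w$ (over all $w\notin A_p$ in Type~II, over each big part in Type~III). I would use this to show that a single-exchange difference $D(u,v):=f(x)-f(x-\chi_u+\chi_v)$ depends only on the pair $(u,v)$ and not on the rest of the base, and then assemble a potential $p\in\mathbf{R}^n$ with $f(x)=\sum_i p_ix_i+\alpha$ on $\textrm{dom}\,f$, using connectivity of the base family under single exchanges and the cocycle relation $D(u,v)+D(v,w)=D(u,w)$ that holds whenever $u,v,w$ are exchangeable at a common base. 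In Type~III this is clean because every base uses every part exactly once. The main obstacle is Type~II: the family omits one part, single exchanges come in two flavours — within a big part and swapping which part is omitted — and singleton--singleton coefficients are \emph{not} controlled by~(\ref{eq:Type II}). The delicate points are to check that the singleton--singleton contribution is an affine function of which part is omitted (hence absorbable into the linear term), and that in a part-swapping exchange the residual sum $\sum_{w}(a_{uw}-a_{vw})$ is still base-independent because the common part then sweeps a fixed set of parts, each summand being constant on a part by the additive decomposition. Once base-independence is established, the potential exists and the proof concludes.
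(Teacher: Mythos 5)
Your proof of the three characterizations follows essentially the same route as the paper's: you reduce M-convexity to the local exchange criterion (Theorem~\ref{thm:general M-conv}, which the paper packages as Lemma~\ref{lem:quadratic M-conv.}), note that the inequality produced by a pair with two-element symmetric difference $\{i,j\}$, $\{k,l\}$ is exactly~(\ref{eq:Type I}) independently of the common support (legitimate because all shared terms are finite, as the paper points out), and then classify realizable quadruples inside the partition matroid; your pivot analysis --- two of the four indices must share a big part $A_p$, one on each side, $a_{ik}=+\infty$ kills one branch of the minimum, and rerunning the exchange with $j$ and $l$ interchanged upgrades the inequality to the equalities~(\ref{eq:Type II}) and~(\ref{eq:Type III}) --- is precisely the paper's argument, including the counting of remaining parts against $r-2$. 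Where you genuinely diverge is the linearity claim. The paper's argument is very short: since~(\ref{eq:Type II}) and~(\ref{eq:Type III}) are \emph{equalities}, they are invariant under negation, so the function $g$ equal to $f$ on ${\rm dom}\,f$ and $-\infty$ elsewhere is M-concave by the characterization just proved, and the M-separation theorem (Theorem~\ref{thm:M-separation}) sandwiches an affine function between $g$ and $f$, forcing $f$ to be affine on its domain. Your direct potential construction is a valid alternative, and the delicate points you flag do resolve the way you predict: condition~(\ref{eq:Type II}) splits the crossing coefficients additively, so on any base the coefficient $c_{v_p}$ of a chosen representative is multiplied by the constant $r-1$, and --- the key Type~II observation --- every base omits exactly one part, so any quantity depending only on the omitted part $P$ is affine in $x$ via the indicators $1-\sum_{v\in P}x_v$ (using $|{\rm supp}(x)\cap P|\le 1$); this is exactly what absorbs the singleton--singleton coefficients that~(\ref{eq:Type II}) leaves unconstrained. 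In Type~III every base contains all of $A_0$, so $\sum_{s\in A_0}a_{vs}$ is simply folded into the linear coefficient of $v$. In sum: your approach is elementary and self-contained, and it makes explicit where the potential $p^*$ comes from, at the cost of a nontrivial case analysis that you sketched but would still need to write out; the paper's approach buys brevity by invoking general discrete-convex-analysis machinery, with the small elegance that the separation argument needs no distinction between Types~II and~III and no discussion of the uncontrolled coefficients at all.
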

The function defined in~(\ref{eq:example}) is an example of Type~II.
If $a_{ij}$ is finite value for all distinct $i,j \in [n]$,
the function $f$ is of Type~I.
Hence Theorem~\ref{thm:Type} implies Theorem~\ref{thm:poly finite} as the finite case.
By Theorem~\ref{thm:Type},
we see that QMCTP is solvable in polynomial time under Conditions~A and~B.

In the proof of Theorem~\ref{thm:Type},
we use the following facts about the local exchange axiom  
characterizing M-convexity, which are immediate corollaries of~\cite[Theorem~6.4]{book/Murota03} (see also~\cite[Proposition~6.8]{book/Murota03}).
\begin{thm}[\cite{book/Murota03}]\label{thm:general M-conv}
	A function $f : \{0,1\}^n \rightarrow \overline{\mathbf{R}}$ with ${\rm dom}\ f \subseteq \{ x \in \{0,1\}^n \mid \sum_i x_i = r \}$ is M-convex if and only if
	${\rm dom}\ f$ is M-convex and
	\begin{align*}
	&f(z+\chi_i +\chi_j) + f(z +\chi_k+\chi_l)\\
	&\ \geq \min \{ f(z+\chi_i +\chi_k) + f(z +\chi_j+\chi_l), f(z+\chi_i +\chi_l) + f(z +\chi_j+\chi_k) \}
	\end{align*}
	holds for all $z \in \{0,1\}^n$ and all distinct $i,j,k,l \in [n]$ such that $z + \chi_i + \chi_j, z + \chi_k + \chi_l \in {\rm dom}\ f$.
\end{thm}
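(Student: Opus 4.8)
The statement to prove is the local-exchange characterization of M-convexity, so the plan is to establish both implications, with essentially all the work in the ``if'' direction. The \emph{only-if} direction is routine: if $f$ is M-convex then ${\rm dom}\ f$ is M-convex (as already noted before Lemma~\ref{lem:i,j in domf}), and the displayed inequality is merely a special case of the Exchange Axiom. Concretely, given $z$ and distinct $i,j,k,l$ with $z+\chi_i+\chi_j,\ z+\chi_k+\chi_l \in {\rm dom}\ f$, I would apply the Exchange Axiom to $x := z+\chi_i+\chi_j$, $y := z+\chi_k+\chi_l$ and the index $i \in {\rm supp}(x)\setminus{\rm supp}(y) = \{i,j\}$; the resulting partner lies in $\{k,l\}$, and the two possible outcomes are exactly the two terms inside the minimum, so the inequality follows.

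For the \emph{if} direction I would assume that ${\rm dom}\ f$ is M-convex and that the local inequality holds, and prove the full Exchange Axiom by induction on $d := |{\rm supp}(x)\setminus{\rm supp}(y)|$. Since ${\rm dom}\ f$ lies in the hyperplane $\sum_i x_i = r$, we always have $|{\rm supp}(x)\setminus{\rm supp}(y)| = |{\rm supp}(y)\setminus{\rm supp}(x)| = d$, and every exchange stays on this hyperplane. The cases $d \le 1$ are trivial, and $d=2$ is the crux of the base case: writing $x = z+\chi_i+\chi_{i'}$ and $y = z+\chi_j+\chi_{j'}$, one checks that the two terms of the hypothesized minimum are precisely $f(x-\chi_i+\chi_j)+f(y+\chi_i-\chi_j)$ and $f(x-\chi_i+\chi_{j'})+f(y+\chi_i-\chi_{j'})$, so the local inequality directly supplies an admissible partner for $i$.

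For the inductive step ($d \ge 3$, fixing $i \in {\rm supp}(x)\setminus{\rm supp}(y)$) my plan is to chain the inductive inequality with the local one. First, the M-convexity of ${\rm dom}\ f$ lets me choose some $i' \neq i$ in ${\rm supp}(x)\setminus{\rm supp}(y)$ together with a partner $j'$ so that $y' := y+\chi_{i'}-\chi_{j'} \in {\rm dom}\ f$; since $i'\neq i$, the index $i$ still lies in ${\rm supp}(x)\setminus{\rm supp}(y')$ and $x,y'$ are at distance $d-1$. The induction hypothesis then yields $j''$ with $f(x)+f(y') \ge f(x-\chi_i+\chi_{j''}) + f(y'+\chi_i-\chi_{j''})$. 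Finally I would apply the local inequality at the base point $z := y-\chi_{j'}-\chi_{j''}$ with the quadruple $\{i,i',j',j''\}$: its two ``given'' points are $z+\chi_i+\chi_{i'} = y'+\chi_i-\chi_{j''}$ and $z+\chi_{j'}+\chi_{j''}=y$, both in ${\rm dom}\ f$, so the hypothesis applies. Adding this local inequality to the inductive one and cancelling the shared value $f(y'+\chi_i-\chi_{j''})$ collapses to the Exchange-Axiom inequality for $(x,y,i)$ with partner $j''$.

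The hard part will be this last cancellation, because the minimum in the local inequality splits into two branches and only the favourable one, the branch producing $f(y+\chi_i-\chi_{j''})$, chains cleanly to the goal. To force that branch I would argue on a minimal counterexample, choosing $(x,y,i)$ violating the Exchange Axiom with $d$ as small as possible so that all distance-$(d-1)$ triples already obey it, and then show that the unfavourable branch either contradicts this minimality or can be eliminated by re-selecting the intermediate exchange $(i',j')$. Controlling this selection, while simultaneously using M-convexity of the domain at each stage to guarantee that all auxiliary points stay inside ${\rm dom}\ f$, is the genuine technical obstacle; the remainder is bookkeeping with $\chi$-vectors.
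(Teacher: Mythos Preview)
The paper does not give its own proof of this statement: it is quoted as a known fact, introduced with ``which are immediate corollaries of~\cite[Theorem~6.4]{book/Murota03} (see also~\cite[Proposition~6.8]{book/Murota03}).'' So there is no in-paper argument to compare your proposal against; the intended ``proof'' is simply the citation.

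Regarding your direct argument: the only-if direction and the base case $d=2$ are fine, and the inductive scaffolding (move from $y$ to a nearby $y'$ at distance $d-1$ from $x$, apply induction, then bridge back to $y$ via the local inequality) is indeed the standard shape of such local-to-global proofs for valuated matroids. However, you have correctly identified, and then left open, the actual difficulty. When the local inequality at $z=y-\chi_{j'}-\chi_{j''}$ returns the unfavourable branch, the chain does not cancel to an Exchange-Axiom inequality for $(x,y,i)$; you obtain instead
\[
f(x)+f(y')+f(y)\ \ge\ f(x-\chi_i+\chi_{j''}) + f(y+\chi_i-\chi_{j'}) + f(y+\chi_{i'}-\chi_{j''}),
\]
which by itself yields neither partner $j'$ nor $j''$. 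Your proposed fix (``minimal counterexample, then re-select $(i',j')$'') is only a plan, not an argument: minimality in $d$ alone does not rule out the bad branch, and you have not specified any secondary quantity whose minimality would be contradicted. In the standard proofs (Dress--Wenzel for valuated matroids; Murota, Theorem~6.4) this step is handled with extra care---for example by also optimizing over the choice of $j''$ and by exploiting a second application of the inductive hypothesis to the pair $(x,\,y+\chi_{i'}-\chi_{j''})$, which again sits at distance $d-1$---so that the two branches both lead to an admissible partner. Until you pin down that mechanism, the inductive step is incomplete.
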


\begin{lem}\label{lem:quadratic M-conv.}
	A function $f$ of the form~{\rm (\ref{eq:f})} is M-convex if and only if
	for every distinct $i,j,k,l \in [n]$ such that there exists $z \in \{0,1\}^n$ with $z + \chi_i + \chi_j, z + \chi_k + \chi_l \in {\rm dom}\ f$,
	it holds that
	\begin{align*}
	a_{ij} + a_{kl} \geq \min \{ a_{ik} + a_{jl}, a_{il}+a_{jk} \}.
	\end{align*}
\end{lem}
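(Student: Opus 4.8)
The plan is to specialize Theorem~\ref{thm:general M-conv} to the quadratic form~(\ref{eq:f}) by an explicit computation, so that the value–exchange inequality collapses onto the coefficients $a_{pq}$. Fix a potential witness $z$ with $\sum_s z_s = r-2$ and distinct $i,j,k,l \notin \mathrm{supp}(z)$. Writing $C_z := \sum_{s \in \mathrm{supp}(z)} a_s + \sum_{\{s,t\} \subseteq \mathrm{supp}(z)} a_{st}$ and $b_p := a_p + \sum_{s \in \mathrm{supp}(z)} a_{ps}$, one checks the identity $f(z + \chi_p + \chi_q) = C_z + b_p + b_q + a_{pq}$ for any distinct $p,q \notin \mathrm{supp}(z)$. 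Consequently both sides of the exchange inequality of Theorem~\ref{thm:general M-conv} carry the same offset $2C_z + (b_i + b_j + b_k + b_l)$, which is finite once $z+\chi_i+\chi_j, z+\chi_k+\chi_l \in \mathrm{dom}\,f$; subtracting it turns that inequality into exactly $a_{ij} + a_{kl} \geq \min\{a_{ik}+a_{jl}, a_{il}+a_{jk}\}$. Since the reduced inequality no longer involves $z$, it is the same constraint for every witness of a given quadruple, which is why the lemma quantifies over quadruples admitting some witness.

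For the forward direction I would assume $f$ is M-convex and invoke Theorem~\ref{thm:general M-conv}: for each quadruple with a witness $z$ the value–exchange inequality holds, its left-hand side is finite, so the right-hand minimum is finite, and cancelling the offset yields the coefficient inequality. The converse is the substantial direction and splits in two. First, adding the finite offset back to the coefficient inequality recovers the functional condition of Theorem~\ref{thm:general M-conv}. Second — and this is where the infinite coefficients do real work — I must still establish that $\mathrm{dom}\,f$ is M-convex, since Theorem~\ref{thm:general M-conv} demands this separately rather than deriving it.

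Here the key device is a finiteness dictionary: once $z + \chi_i + \chi_j, z + \chi_k + \chi_l \in \mathrm{dom}\,f$, every coefficient internal to $\mathrm{supp}(z)$ and every coefficient joining $\mathrm{supp}(z)$ to $\{i,j,k,l\}$ is already finite, so $z + \chi_i + \chi_k \in \mathrm{dom}\,f$ holds \emph{iff} $a_{ik} < +\infty$, and similarly for the other rematchings. Because $a_{ij}+a_{kl}$ is finite, the coefficient inequality forces $\min\{a_{ik}+a_{jl}, a_{il}+a_{jk}\}$ to be finite, and a sum of two extended reals is finite iff both summands are; hence one of the rematchings has both coefficients finite, which by the dictionary means that one of the pairs $\{z+\chi_i+\chi_k, z+\chi_j+\chi_l\}$ or $\{z+\chi_i+\chi_l, z+\chi_j+\chi_k\}$ lies entirely in $\mathrm{dom}\,f$. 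This is precisely the local exchange property of the set $\mathrm{dom}\,f$, and by the local characterization of M-convex sets (the specialization of \cite[Theorem~6.4]{book/Murota03} to indicator functions) it certifies that $\mathrm{dom}\,f$ is M-convex. Combining the two parts through Theorem~\ref{thm:general M-conv} gives M-convexity of $f$.

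The main obstacle I anticipate is exactly this domain step: Theorem~\ref{thm:general M-conv} treats M-convexity of $\mathrm{dom}\,f$ as a hypothesis rather than a conclusion, so the coefficient inequality must be shown to encode it, and the argument only goes through because of the careful $+\infty$ bookkeeping together with the availability of the local set characterization. By contrast the algebraic identity $f(z+\chi_p+\chi_q) = C_z + b_p + b_q + a_{pq}$ and the cancellation of the common offset are routine and I would not belabor them.
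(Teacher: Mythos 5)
Your algebraic core---the identity $f(z+\chi_p+\chi_q)=C_z+b_p+b_q+a_{pq}$, the finiteness of the common offset once $z+\chi_i+\chi_j,\,z+\chi_k+\chi_l\in{\rm dom}\ f$, and its cancellation---is exactly the computation in the paper's proof, and your forward direction is fine. The gap is in your converse, at the very step you flag as the crux: deriving M-convexity of ${\rm dom}\ f$ from the coefficient inequality. Theorem~\ref{thm:general M-conv} takes M-convexity of the effective domain as a \emph{hypothesis}, so its ``specialization to indicator functions'' is circular, and no such local characterization of M-convex sets holds: the rematching property your finiteness dictionary yields only constrains pairs of domain points whose supports differ in two elements, and it does not propagate to pairs farther apart. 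Concretely, take $n=6$, $r=3$, all $a_i=0$, $a_{ij}:=0$ if $\{i,j\}\subseteq\{1,2,3\}$ or $\{i,j\}\subseteq\{4,5,6\}$, and $a_{ij}:=+\infty$ otherwise. Then ${\rm dom}\ f$ consists of exactly the two characteristic vectors of $\{1,2,3\}$ and $\{4,5,6\}$; since these have disjoint supports, no quadruple of distinct $i,j,k,l$ admits a witness $z$ (such a $z$ would need $|{\rm supp}(z)|=1$ with ${\rm supp}(z)$ contained in both supports), so the coefficient condition---and your distance-two set-exchange property---holds vacuously, yet ${\rm dom}\ f$ violates the exchange axiom and $f$ is not M-convex. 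Hence the coefficient inequality does \emph{not} encode M-convexity of the domain, and your converse, read as a free-standing claim, asserts something false.

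What saves the lemma in the paper is the standing assumption of Section~\ref{sec:chara}: Conditions~A and~B are in force throughout (the example above violates Condition~B, since $G_f = K_{3,3}$ is connected but not complete), and Lemma~\ref{lem:dom f} then gives M-convexity of ${\rm dom}\ f$ outright---the paper recalls precisely this immediately after the lemma's statement, and its proof consists only of the offset computation you also carried out. So the repair is not to strengthen your dictionary but to delete the purported local-characterization step and instead invoke Conditions~A and~B together with Lemma~\ref{lem:dom f} to discharge the domain hypothesis before applying Theorem~\ref{thm:general M-conv}; with that substitution the rest of your argument coincides with the paper's proof.
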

Recall that, for a function $f$ of the form~(\ref{eq:f}),
$\textrm{dom }f$ is always M-convex,
since we assume that $f$ satisfies Conditions~A and~B.
By Lemma~\ref{lem:quadratic M-conv.},
the condition~(\ref{eq:anti-tree metric}) in Theorem~\ref{thm:poly finite} (or the condition~(\ref{eq:Type I}) in Theorem~\ref{thm:Type}) is sufficient for M-convexity.
However, this is not necessary in general.
\begin{proof}[Proof of Lemma~\ref{lem:quadratic M-conv.}]
	Take any $z \in \{0,1\}^n$ and distinct $i,j,k,l \in [n]$ such that $z + \chi_i + \chi_j, z + \chi_k + \chi_l \in {\rm dom}\ f$.
	By Theorem~\ref{thm:general M-conv}, it suffices to show that for such $i,j,k,l$,
	\begin{align*}
	&f(z + \chi_i + \chi_j) + f(z + \chi_k + \chi_l)\\
	&\ \geq \min\{f(z + \chi_i + \chi_k) + f(z + \chi_j + \chi_l), f(z + \chi_i + \chi_l) + f(z + \chi_j + \chi_k)\}
	\end{align*}
	holds if and only if $a_{ij} + a_{kl} \geq \min\{a_{ik} + a_{jl}, a_{il} + a_{jk}\}$ holds
	(note that the inequality $a_{ij} + a_{kl} \geq \min\{a_{ik} + a_{jl}, a_{il} + a_{jk}\}$ is independent of the choice of $z$).
	
	Define $g : \{0,1\}^n \rightarrow \overline{\mathbf{R}}$ by
	\begin{align*}
	g(x) := \sum_{i \in [n]} a_i x_i + \sum_{1 \leq i < j \leq n} a_{ij} x_i x_j
	\end{align*}
	for $x = (x_1, x_2, \dots, x_n) \in \{0,1\}^n$.
	Then we have
	\begin{align}
	f(z + \chi_i + \chi_j) = g(z) + a_i + a_j + \sum_{p \in \textrm{supp}(z)} a_{ip} + \sum_{p \in \textrm{supp}(z)} a_{jp} + a_{ij},\label{eq:2 M g(z+i+j)}\\
	f(z + \chi_k + \chi_l) = g(z) + a_k + a_l + \sum_{p \in \textrm{supp}(z)} a_{kp} + \sum_{p \in \textrm{supp}(z)} a_{lp} + a_{kl}\label{eq:2 M g(z+k+l)}.
	\end{align}
	Since $f(z + \chi_i + \chi_j)$ and $f(z + \chi_k + \chi_l)$ take finite values,
	each term of (\ref{eq:2 M g(z+i+j)}) and (\ref{eq:2 M g(z+k+l)}), i.e.,
	$g(z)$, $a_{ij}$, $a_{kl}$, and $a_{ip}, a_{jp}, a_{kp}, a_{lp}$ for $p \in \textrm{supp}(z)$, also takes a finite value.
	Hence we obtain
	\begin{align*}
	&f(z + \chi_i + \chi_j) + f(z + \chi_k + \chi_l)\\
	&\ \geq \min\{f(z + \chi_i + \chi_k) + f(z + \chi_j + \chi_l), f(z + \chi_i + \chi_l) + f(z + \chi_j + \chi_k)\}\\
	\Leftrightarrow\ &a_{ij} + a_{kl} \geq \min\{a_{ik} + a_{jl}, a_{il} + a_{jk}\}.
	\end{align*}
\end{proof}
For the function $f$ defined in~(\ref{eq:example}),
we can see that there is no $z \in \{0,1\}^5$ such that $z + \chi_1 + \chi_2$ and $z + \chi_3 + \chi_4$ both belong to $\textrm{dom }f$.
This is the reason why the inequality $a_{12} + a_{34} \geq \min \{ a_{13} + a_{24}, a_{14} + a_{23} \}$ is not necessary for the M-convexity of $f$.

A function $f$ is said to be {\it M-concave} if $-f$ is M-convex.
The following theorem (M-separation theorem) holds.
\begin{thm}[{\cite[Theorem~8.15]{book/Murota03}}]\label{thm:M-separation}
	Suppose that $f : \{0, 1\}^n \rightarrow \mathbf{R} \cup \{+\infty\}$ is M-convex and $g : \{ 0, 1\}^n \rightarrow \mathbf{R} \cup \{-\infty\}$ is M-concave satisfying ${\rm dom}\ f \cap {\rm dom}\ g \neq \emptyset$ and $g(x) \leq f(x)$ for any $x \in {\rm dom}\ f \cap {\rm dom}\ g$.
	Then there exist $\alpha^* \in \mathbf{R}$ and $p^* \in \mathbf{R}^n$ such that
	\begin{align*}
	g(x) \leq \alpha^* + \sum_{i \in [n]} p^*_i x_i \leq f(x) \qquad (x \in{\rm dom}\ f \cap {\rm dom}\ g).
	\end{align*}
\end{thm}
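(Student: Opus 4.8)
The plan is to reduce the separation to the construction of a single price vector $p^{*}$ that supports $f$ from below and $g$ from above at a common optimal point, and to obtain $p^{*}$ as a shortest-path potential in an exchange graph. Since the Exchange Axiom preserves $\sum_i x_i$, both ${\rm dom}\,f$ and ${\rm dom}\,g$ lie in a common level set $\{x\mid\sum_i x_i=r\}$, so ${\rm dom}\,f\cap{\rm dom}\,g$ is the set of common bases of two matroids of the same rank; in particular it is finite, and I may fix a minimizer $x^{*}$ of $h:=f-g$ over it. I claim it suffices to find $p^{*}\in\mathbf{R}^n$ such that $x^{*}$ minimizes $f(x)-\sum_i p^{*}_i x_i$ and maximizes $g(x)-\sum_i p^{*}_i x_i$. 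Granting this and setting $\alpha^{*}:=f(x^{*})-\sum_i p^{*}_i x^{*}_i$, the first optimality gives $f(x)\ge\sum_i p^{*}_i x_i+\alpha^{*}$ for all $x\in{\rm dom}\,f$, while the second together with $g(x^{*})\le f(x^{*})$ gives $g(x)\le\sum_i p^{*}_i x_i+g(x^{*})-\sum_i p^{*}_i x^{*}_i\le\sum_i p^{*}_i x_i+\alpha^{*}$, which is exactly the asserted sandwich.

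To produce $p^{*}$ I would build a directed graph $D$ on vertex set $[n]$. Writing $S:={\rm supp}(x^{*})$, for each $i\in S$ and $j\in[n]\setminus S$ I put an arc $i\to j$ of length $\ell(i\to j):=f(x^{*}-\chi_i+\chi_j)-f(x^{*})$ and an arc $j\to i$ of length $\ell(j\to i):=g(x^{*})-g(x^{*}-\chi_i+\chi_j)$; these lengths are well defined, taking the value $+\infty$ when the swapped point leaves a domain. A routine computation shows that a potential $p^{*}$ satisfying the shortest-path feasibility $p^{*}_{v}-p^{*}_{u}\le\ell(u\to v)$ for every arc $u\to v$ is precisely a price vector for which the two desired \emph{single-swap} optimality conditions hold. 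The length of the $2$-cycle $i\to j\to i$ equals $h(x^{*}-\chi_i+\chi_j)-h(x^{*})\ge 0$ by the minimality of $x^{*}$, so all $2$-cycles are nonnegative; the content of the argument is to show that $D$ has \emph{no} negative directed cycle at all, for then shortest-path distances from an artificial source yield a feasible $p^{*}$.

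Proving the absence of negative cycles is the heart of the matter and the step I expect to be the main obstacle. A negative dicycle alternates between $S$ and $[n]\setminus S$ and thus encodes a family of simultaneous swaps turning $x^{*}$ into some $x$ with $\sum_i x_i=r$; the aim is to show that the total length of such a cycle dominates $h(x)-h(x^{*})$, so that a negative cycle would furnish an $x\in{\rm dom}\,f\cap{\rm dom}\,g$ with $h(x)<h(x^{*})$, contradicting optimality. This requires upgrading the single-exchange axioms of $f$ (M-convex) and $-g$ (M-convex) to a \emph{simultaneous}-exchange statement: the valuated-matroid lemma asserting that if the minimum-weight perfect matching between $S\setminus{\rm supp}(x)$ and ${\rm supp}(x)\setminus S$ in the exchange bipartite graph is unique, then that matching is realizable as a sequence of single swaps staying in the domain, with an additive bound on the cost. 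Taking a shortest (hence chordless, uniquely matched) negative cycle, applying this lemma to both $f$ and $-g$, and adding the two inequalities, the cross terms cancel and one reads off $h(x)-h(x^{*})\le\ell(\text{cycle})<0$, the desired contradiction.

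Finally I would convert the single-swap optimality secured by $p^{*}$ into the global optimality used in the first paragraph. The functions $f(x)-\sum_i p^{*}_i x_i$ and $-g(x)+\sum_i p^{*}_i x_i$ are again M-convex (adding a linear term preserves M-convexity, directly from the Exchange Axiom), and for an M-convex function the local minimizer criterion---$x^{*}$ is a global minimizer as soon as no single swap $x^{*}-\chi_i+\chi_j$ strictly decreases the value---holds by a standard descent argument based on the local exchange characterization of Theorem~\ref{thm:general M-conv}. Applying this to both functions yields the global optimality of $x^{*}$ for $f-\sum_i p^{*}_i x_i$ and for $g-\sum_i p^{*}_i x_i$, completing the proof. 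The only genuinely delicate ingredient remains the no-negative-cycle argument above, where the valuated-matroid simultaneous-exchange (unique-minimum-matching) lemma does the real work.
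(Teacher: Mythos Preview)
The paper does not prove Theorem~\ref{thm:M-separation}; it is quoted from \cite[Theorem~8.15]{book/Murota03} as a black box and invoked without argument in the ``Linearity'' paragraph of the proof of Theorem~\ref{thm:Type}. So there is no proof in the paper to compare your proposal against.

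For what it is worth, your sketch is the standard route to this result (essentially the valuated-matroid-intersection proof in Murota's book): pick a minimizer $x^{*}$ of $f-g$, build the auxiliary exchange digraph at $x^{*}$, show there is no negative directed cycle, take shortest-path potentials as $p^{*}$, and use the M-convex local-optimality criterion to turn single-swap optimality into global optimality. Your identification of the no-negative-cycle step as the delicate part is accurate; the point you call the ``unique-minimum-matching lemma'' is exactly the upper-bound lemma used there. One small caution: in your cycle argument you want the resulting $x$ to lie in ${\rm dom}\,f\cap{\rm dom}\,g$, and this does not follow from ``chordless'' alone---the usual trick is to take, among negative cycles, one with the fewest arcs, which forces the relevant exchange matchings on both the $f$-side and the $g$-side to be unique minima, whence the simultaneous-exchange lemma applies to each and yields $x\in{\rm dom}\,f$ and $x\in{\rm dom}\,g$ separately. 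With that refinement your outline is correct.
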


We are now ready to prove Theorem~\ref{thm:Type}.
\begin{proof}[Proof of Theorem~\ref{thm:Type}]
	First we show a characterization of M-convexity.
	For $i \in [n]$ denote by $B_i$ the connected component of $G_{f}$ containing $i$.
	That is, $B_i = \{i\}$ for $i \in A_0$, and $B_i = A_p$ for $i \in A_p$. 
	Note that $x \in \textrm{dom }f$ if and only if $\sum_i x_i = r$ and $|\textrm{supp}(x) \cap A_p| \leq 1$ for $p \in [m]$.
	If $a_{ij} = +\infty$ or $a_{kl} = +\infty$,
	then it holds that $f(z + \chi_i + \chi_j) = +\infty$ or $f(z + \chi_k + \chi_l) = +\infty$ for all $z \in \{0,1\}^n$.
	In the following, we consider each type in turn.
	
	\paragraph{Type I.}
	We show that for all distinct $i,j,k,l \in [n]$ with $a_{ij} < +\infty$ and $a_{kl} < +\infty$,
	there exists $z \in \{0,1\}^n$ such that $z + \chi_i + \chi_j, z + \chi_k + \chi_l \in \textrm{dom }f$.
	$|A_0 \setminus (B_i \cup B_j \cup B_k \cup B_l)| + |\{A_1, A_2,\dots, A_m\} \setminus \{B_i, B_j, B_k, B_l\}| \geq r-2$ holds since $|A_0| + m \geq r+2$.
	Therefore we can take $z \in \{0,1\}^n$ satisfying $\textrm{supp}(z) \subseteq [n] \setminus (B_i \cup B_j \cup B_k \cup B_l)$, $|\textrm{supp}(z) \cap A_p| \leq 1$ for $p \in [m]$, and $\sum_{i} z_i = r-2$.
	Then $z + \chi_i + \chi_j, z + \chi_k + \chi_l \in \textrm{dom }f$ holds for such $z$.
	
	By Lemma~\ref{lem:quadratic M-conv.},
	$f$ is M-convex if and only if for every distinct $i,j,k,l \in [n]$ with $a_{ij}, a_{kl} < +\infty$,
	it holds that $a_{ij} + a_{kl} \geq \min\{a_{ik} + a_{jl}, a_{il} + a_{jk}\}$.
	Moreover, if $a_{ij} = +\infty$ or $a_{kl} = +\infty$,
	then $a_{ij} + a_{kl} \geq \min\{a_{ik} + a_{jl}, a_{il} + a_{jk}\}$ automatically holds.
	Hence $f$ is M-convex if and only if for every distinct $i,j,k,l \in [n]$,
	it holds that $a_{ij} + a_{kl} \geq \min\{a_{ik} + a_{jl}, a_{il} + a_{jk}\}$.
	
	\paragraph{Type II.}
	We show that for distinct $i,j,k,l \in [n]$ with $a_{ij}, a_{kl} < +\infty$, there exists $z \in \{0,1\}^n$ such that $z + \chi_i + \chi_j, z + \chi_k + \chi_l \in \textrm{dom }f$
	if and only if $(B_i \cup B_j) \cap (B_k \cup B_l) \neq \emptyset$ holds
	(note that we have $B_i \cap B_j = B_k \cap B_l = \emptyset$ since $a_{ij}, a_{kl} < +\infty$).
	
	Suppose $(B_i \cup B_j) \cap (B_k \cup B_l) = \emptyset$ (i.e., $B_i$, $B_j$, $B_k$, and $B_l$ are all disjoint).
	Then $|A_0| + m \geq 4$.
	Hence $r \geq 3$ since $|A_0| + m = r+1$.
	Furthermore we obtain $|A_0 \setminus (B_i \cup B_j \cup B_k \cup B_l)| + |\{A_1, A_2,\dots, A_m\} \setminus \{B_i, B_j, B_k, B_l\}| = r-3$.
	Hence for all $z \in \{0,1\}^n$ such that $\sum_i z_i = r-2$ and $|\textrm{supp}(z) \cap A_p| \leq 1$ for $p \in [m]$ with $A_p \neq B_i, B_j,B_k,B_l$, it holds that $|\textrm{supp}(z) \cap (B_i \cup B_j \cup B_k \cup B_l)| \neq \emptyset$.
	This means $z + \chi_i+\chi_j \not\in \textrm{dom }f$ or $z+\chi_k+\chi_l \not\in \textrm{dom }f$.
	Thus, for $i,j,k,l \in [n]$ with $(B_i \cup B_j) \cap (B_k \cup B_l) = \emptyset$, there is no $z$ satisfying $z + \chi_i+\chi_j, z+\chi_k+\chi_l \in \textrm{dom }f$.
	
	Suppose $(B_i \cup B_j) \cap (B_k \cup B_l) \neq \emptyset$.
	Without loss of generality, we also suppose $B_i \cap B_k \neq \emptyset$.
	Then there exists $p \in [m]$ such that $B_i = B_k = A_p$.
	Since $|A_0| + m = r+1$,
	we have $|A_0 \setminus (B_i \cup B_j \cup B_k \cup B_l)| + |\{A_1, A_2,\dots, A_m\} \setminus \{B_i, B_j, B_k, B_l\}| = |A_0 \setminus (B_j \cup B_l)| + |\{A_1, A_2,\dots, A_m\} \setminus \{A_p, B_j, B_l\}| \geq r-2$.
	Therefore we can take $z \in \{0,1\}^n$ satisfying $\textrm{supp}(z) \subseteq [n] \setminus (B_i \cup B_j \cup B_k \cup B_l)$, $|\textrm{supp}(z) \cap A_p| \leq 1$ for $p \in [m]$, and $\sum_{i} z_i = r-2$.
	Then $z + \chi_i + \chi_j, z + \chi_k + \chi_l \in \textrm{dom }f$ holds for such $z$.
	
	By Lemma~\ref{lem:quadratic M-conv.},
	$f$ is M-convex if and only if for every $p \in [m]$, distinct $i,k \in A_p$, and distinct $j,l \in [n] \setminus A_p$,
	it holds that $a_{ij} + a_{kl} \geq \min\{a_{ik} + a_{jl}, a_{il} + a_{jk}\}$.
	Since $a_{ik} = +\infty$, the above inequality can be represented as $a_{ij} + a_{kl} \geq a_{il} + a_{jk}$.
	Moreover, by replacing $j$ with $l$,
	we have $a_{ij} + a_{kl} \leq a_{il} + a_{jk}$.
	Hence $f$ is M-convex if and only if for every $p \in [m]$, distinct $i,k \in A_p$, and distinct $j,l \in [n] \setminus A_p$,
	it holds that $a_{ij} + a_{kl} = a_{il} + a_{jk}$.
	
	\paragraph{Type III.}
	We show that for distinct $i,j,k,l \in [n]$ with $a_{ij}, a_{kl} < +\infty$, there exists $z \in \{0,1\}^n$ such that $z + \chi_i + \chi_j, z + \chi_k + \chi_l \in \textrm{dom }f$
	if and only if $B_i \cup B_j = B_k \cup B_l$ holds.
	
	Suppose $B_i \cup B_j \neq B_k \cup B_l$.
	Without loss of generality,
	we also suppose $B_i \neq B_k$ and $B_i \neq B_l$.
	Then $|A_0| + m \geq 3$.
	Hence $r \geq 3$ since $|A_0| + m = r$.
	Furthermore we obtain $|A_0 \setminus (B_i \cup B_j \cup B_k \cup B_l)| + |\{A_1, A_2,\dots, A_m\} \setminus \{B_i, B_j, B_k, B_l\}| \leq |A_0 \setminus (B_i \cup B_k \cup B_l)| + |\{A_1, A_2,\dots, A_m\} \setminus \{B_i, B_k, B_l\}|  = r-3$.
	Hence for all $z \in \{0,1\}^n$ such that $\sum_i z_i = r-2$ and $|\textrm{supp}(z) \cap A_p| \leq 1$ for $p \in [m]$ with $A_p \neq B_i, B_j,B_k,B_l$, it holds that $|\textrm{supp}(z) \cap (B_i \cup B_j \cup B_k \cup B_l)| \neq \emptyset$.
	This means $z + \chi_i+\chi_j \not\in \textrm{dom }f$ or $z+\chi_k+\chi_l \not\in \textrm{dom }f$.
	Thus, for $i,j,k,l \in [n]$ with $(B_i \cup B_j) \cap (B_k \cup B_l) = \emptyset$, there is no $z$ satisfying $z + \chi_i+\chi_j, z+\chi_k+\chi_l \in \textrm{dom }f$.
	
	Suppose $B_i \cup B_j = B_k \cup B_l$.
	Without loss of generality, we also suppose $B_i = B_k$ and $B_j = B_l$.
	Then there exist distinct $p, q \in [m]$ such that $B_i = B_k = A_p$ and $B_j = B_l = A_q$.
	Since $|A_0| + m = r$,
	we have $|A_0 \setminus (B_i \cup B_j \cup B_k \cup B_l)| + |\{A_1, A_2,\dots, A_m\} \setminus \{B_i, B_j, B_k, B_l\}| = |A_0| + |\{A_1, A_2,\dots, A_m\} \setminus \{A_p, A_q\}| = r-2$.
	Therefore we can take $z \in \{0,1\}^n$ satisfying $\textrm{supp}(z) \subseteq [n] \setminus (B_i \cup B_j \cup B_k \cup B_l)$, $|\textrm{supp}(z) \cap A_p| \leq 1$ for $p \in [m]$, and $\sum_{i} z_i = r-2$.
	Then $z + \chi_i + \chi_j, z + \chi_k + \chi_l \in \textrm{dom }f$ holds for such $z$.
	
	By Lemma~\ref{lem:quadratic M-conv.},
	$f$ is M-convex if and only if for every distinct $p,q \in [m]$, distinct $i,k \in A_p$, and distinct $j,l \in A_q$,
	it holds that $a_{ij} + a_{kl} \geq \min\{a_{ik} + a_{jl}, a_{il} + a_{jk}\}$.
	Since $a_{ik} = a_{jl} = +\infty$, the above inequality can be represented as $a_{ij} + a_{kl} \geq a_{il} + a_{jk}$.
	Moreover, by replacing $j$ with $l$,
	we have $a_{ij} + a_{kl} \leq a_{il} + a_{jk}$.
	Hence $f$ is M-convex if and only if for every distinct $p,q \in [m]$, distinct $i,k \in A_p$, and distinct $j,l \in A_q$,
	it holds that $a_{ij} + a_{kl} = a_{il} + a_{jk}$.
	
	\paragraph{Linearity.}
	Then we show linearity of an M-convex function $f$ of Type~II or~III.
	By the characterization of Type~II or~III,
	the function $g$ defined by
	\begin{align*}
	g(x) :=\begin{cases}
	f(x) & \text{if $f(x) < +\infty$},\\
	-\infty & \text{if $f(x) = +\infty$}
	\end{cases}
	\end{align*}
	is M-concave for an M-convex function $f$ of Type~II or~III.
	By Theorem~\ref{thm:M-separation},
	there exist $\alpha^* \in \mathbf{R}$ and $p^* \in \mathbf{R}^n$ such that
	\begin{align*}
	f(x) = g(x) \leq \alpha^* + \sum_{i \in [n]} p^*_i x_i \leq f(x) \qquad (x \in{\rm dom}\ f).
	\end{align*}
	This means that $f$ is a linear function on $\textrm{dom }f$.
\end{proof}

\section{Testing Quadratic M-Convexity in Quadratic Time}\label{sec:testing}
In this section,
we present an $O(n^2)$-time algorithm for QMCTP under the assumption that a function $f$ of the form~(\ref{eq:f}) satisfies Condition~A (and Condition~B).
By Theorem~\ref{thm:Type},
it suffices to give an $O(n^2)$-time algorithm for checking the condition (\ref{eq:Type I}), (\ref{eq:Type II}), or (\ref{eq:Type III}) in Theorem~\ref{thm:Type} for each type, respectively.

\subsection{Algorithms}
Our idea used in a proposed algorithm for Type~I is that
the quadratic coefficients $(a_{ij})_{i,j \in [n]}$ of input $f$ are transformed into another $(\hat{a}_{ij})_{i,j \in [n]}$
which has an easily checkable property if $(a_{ij})_{i,j \in [n]}$ satisfies (\ref{eq:Type I}).
For Types~II and~III,
we give simpler conditions equivalent to~(\ref{eq:Type II}) and~(\ref{eq:Type III}),
and check the new one.

We say that $(a_{ij})_{i,j \in [n]}$ satisfies the {\it anti-tree metric property} if $(a_{ij})_{i,j \in [n]}$ satisfies (\ref{eq:Type I}),
that is,
$a_{ij} + a_{kl} \geq \min \{ a_{ik} + a_{jl}, a_{il}+a_{jk} \}$ holds for all distinct $i,j,k,l \in [n]$.
We also say that $(a_{ij})_{i,j \in [n]}$ satisfies the {\it anti-ultrametric property} if $a_{ij} \geq \min \{ a_{ik}, a_{jk} \}$ holds for all distinct $i,j,k \in [n]$.

\begin{description}
	\item[Algorithm I (for Type I).]
	\item[Step 1:] Define $\alpha := \min\{ a_{ij} \mid i,j \in [n]\}$,
	$b_i := \min\{ a_{ij} \mid j \in [n] \setminus \{i\} \} - \alpha$ for $i \in [n]$,
	and $\hat{a}_{ij} := a_{ij}$ for distinct $i,j \in [n]$.
	\item[Step 2:] Update $\hat{a}_{ij} \leftarrow \hat{a}_{ij} - b_i - b_j$ for distinct $i,j \in [n]$.
	\item[Step 3:] If $(\hat{a}_{ij})_{i,j \in [n]}$ satisfies the anti-ultrametric property,
	output that ``$f$ is M-convex.''
	Otherwise, output that ``$f$ is not M-convex.''
	\qed
\end{description}

In Algorithms~II and~III, denote $A_p$ by $[n_p]$ for each $p \in [r]$,
where $n_p := |A_p|$.
\begin{description}
	\item[Algorithm II (for Type II).]
	\item[Step:]
	For all $p \in [r]$,
	if $a_{ij} + a_{i+1,j+1} = a_{i+1, j} + a_{i, j+1}$ holds for every $i \in [n_p-1]$ and $j \in \{ n_p+1, n_p+2, \dots, n-1 \}$,
	output that ``$f$ is M-convex.''
	Otherwise, output that ``$f$ is not M-convex.''
	\qed
\end{description}
\begin{description}
	\item[Algorithm III (for Type III).]
	\item[Step:] For all distinct $p, q \in [r]$,
	if $a_{ij} + a_{i+1,j+1} = a_{i+1, j} + a_{i, j+1}$ holds for every $i \in [n_p-1]$ and $j \in [n_q-1]$,
	output that ``$f$ is M-convex.''
	Otherwise, output that ``$f$ is not M-convex.''
	\qed
\end{description}

\begin{thm}\label{thm:Type I,II,III}
	Algorithms~{\rm I},~{\rm II}, and~{\rm III} work correctly and run in $O(n^2)$ time.
\end{thm}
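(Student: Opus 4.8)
The plan is to treat the three algorithms separately, in each case invoking Theorem~\ref{thm:Type} to replace the assertion ``$f$ is M-convex'' by the corresponding combinatorial condition (\ref{eq:Type I}), (\ref{eq:Type II}), or (\ref{eq:Type III}), and then to prove two things: that the test actually performed by the algorithm is equivalent to that condition, and that it can be executed in $O(n^2)$ time.

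For Algorithm~I the first step is to note that the substitution $a_{ij}\mapsto \hat a_{ij}=a_{ij}-b_i-b_j$ preserves the anti-tree metric property for \emph{arbitrary} reals $b_i$: each of the three sums $a_{ij}+a_{kl}$, $a_{ik}+a_{jl}$, $a_{il}+a_{jk}$ acquires the same correction $b_i+b_j+b_k+b_l$, so (\ref{eq:Type I}) holds for $(a_{ij})$ if and only if it holds for $(\hat a_{ij})$. Since every anti-ultrametric array is anti-tree metric (the anti-version of the classical fact that ultrametrics satisfy the four-point condition), an anti-ultrametric $(\hat a_{ij})$ is automatically anti-tree metric, which together with the invariance gives the soundness of an ``M-convex'' output. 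The substance of the correctness proof is the converse: I would prove, as a separate lemma, that with the particular normalization $b_i=\min_{j\ne i}a_{ij}-\alpha$ (so that $b_i\ge 0$ and $\min_i b_i=0$) the reduced array $(\hat a_{ij})$ is anti-ultrametric \emph{whenever} it is anti-tree metric. This is the discrete-convex analogue of the statement that subtracting node potentials turns a tree metric into an ultrametric; the proof would use $\alpha=\min_{i,j}a_{ij}$ together with the row-minimum normalization to locate a globally minimal reduced entry, and then feed a carefully chosen quadruple into (\ref{eq:Type I}) so that the four-point inequality collapses to the three-point inequality. I expect this reduction lemma to be the main obstacle.

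For the running time of Algorithm~I, Steps~1 and~2 are clearly $O(n^2)$ (computing $\alpha$, the potentials $b_i$, and updating all $\binom{n}{2}$ entries). The only delicate point is Step~3, where the anti-ultrametric property must be tested in $O(n^2)$ rather than the naive $O(n^3)$. I would do this through the bijection between anti-ultrametrics and dendrograms: testing whether $(\hat a_{ij})$ is anti-ultrametric is equivalent to testing whether $(-\hat a_{ij})$ is an ultrametric, and the latter admits an $O(n^2)$ verification by constructing the associated single-linkage hierarchy (for instance from a spanning tree) and checking every entry against it.

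Finally, for Algorithms~II and~III, both conditions (\ref{eq:Type II}) and (\ref{eq:Type III}) assert that a rectangular array — rows indexed by $A_p$, columns by $[n]\setminus A_p$ in Type~II, or by $A_q$ in Type~III — is \emph{additive}, meaning $a_{ij}+a_{kl}=a_{il}+a_{jk}$ for all admissible rows $i,k$ and columns $j,l$. I would establish the elementary equivalence that such an array is additive if and only if all of its \emph{consecutive} relations $a_{ij}+a_{i+1,j+1}=a_{i+1,j}+a_{i,j+1}$ hold: a general relation telescopes along a monotone lattice path joining $(i,j)$ to $(k,l)$, and conversely each consecutive relation is itself an instance of additivity. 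Since this is exactly the test performed by the two algorithms, their correctness follows from Theorem~\ref{thm:Type}. For the running time, after relabeling each $A_p$ as $[n_p]$ the number of consecutive relations inspected is $\sum_{p}(n_p-1)(n-1-n_p)$ for Type~II and $\sum_{p\ne q}(n_p-1)(n_q-1)$ for Type~III; both are at most $\bigl(\sum_p n_p\bigr)^2\le n^2$, so each algorithm runs in $O(n^2)$ time. Here the telescoping equivalence and the $O(n^2)$ anti-ultrametric test are routine, and the genuine difficulty is concentrated in the anti-tree-metric-to-anti-ultrametric reduction lemma for Algorithm~I.
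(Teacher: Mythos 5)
Your treatment of Algorithms~II and~III is complete and coincides with the paper's proof: the paper likewise shows by induction on $(k-i)+(l-j)$ that the consecutive relations $a_{ij}+a_{i+1,j+1}=a_{i+1,j}+a_{i,j+1}$ telescope to the full additivity condition, and the $O(n^2)$ count is immediate. Your soundness argument for Algorithm~I (invariance of~(\ref{eq:Type I}) under $a_{ij}\mapsto a_{ij}-b_i-b_j$, since all three sums in the four-point inequality shift by the same amount $b_i+b_j+b_k+b_l$, together with the fact that anti-ultrametric implies anti-tree metric) is also exactly the paper's. For the $O(n^2)$ anti-ultrametric test in Step~3, your MST/single-linkage verification is a legitimate alternative to the paper's recursive \texttt{Decompose} and laminar-family reconstruction (Lemma~\ref{lem:algo key}); the two are essentially the same top-down single-linkage idea, though you should note that entries may be $+\infty$ (becoming $-\infty$ after negation), which is precisely why the paper formulates its laminar characterization over $\overline{\mathbf{R}}$.

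The genuine gap is the completeness direction for Algorithm~I, which you state as a lemma and explicitly defer: that with $b_i=\min_{j\neq i}a_{ij}-\alpha$ the reduced array is anti-ultrametric whenever the original satisfies the anti-tree metric property. You justify it only by analogy with the classical reduction of tree metrics to ultrametrics, but in the presence of $+\infty$ entries that analogy is unreliable: the paper exhibits a five-point example where the standard basepoint normalization $\hat a_{ij}=a_{ij}-a_{in}-a_{jn}$ produces an anti-ultrametric array from a non-anti-tree-metric one, so the classical relation genuinely fails in this setting and a bespoke proof is required. In the paper this lemma is the bulk of the whole argument and splits into two parts: (i) if every row minimum of an anti-tree-metric array equals $\alpha$, the array is anti-ultrametric (Lemma~\ref{lem:Type I key}; this is the ``carefully chosen quadruple collapsing to a triple'' step your sketch describes), and (ii) after Step~2 every row minimum of $(\hat a_{ij})$ does equal $\alpha$. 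Part (ii) is the hard part your proposal does not touch at all: it requires an induction on $n$ showing that if $\min_{j'}a_{ij'}=a_{ij}>\alpha$ then row $j$ contains an $\alpha$-entry (Lemma~\ref{lem:Type I 1}), a connectivity-and-diameter-two property of the graph of $\alpha$-entries (Lemma~\ref{lem:Type I 2}), and a three-case verification of the inequality $a_{ik}-b_i\geq\min_{i'}a_{i'k}$ (Lemma~\ref{lem:Type I 3}). Without (ii), the quadruple argument of (i) has nothing to apply to, so as written the correctness of Algorithm~I --- the heart of the theorem --- remains unproved.
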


We can check whether $f$ satisfies Condition~B,
i.e., each connected component of $G_f$ is a complete graph, in $O(n^2)$ time.
Thus, by Theorem~\ref{thm:Type I,II,III},
we obtain Theorem~\ref{thm:poly}.
In the rest of this section,
we give the proof of Theorem~\ref{thm:Type I,II,III}.
It is clear that the running time of Algorithms~II and~III are $O(n^2)$.
In Section~\ref{subsec:proof}, we show the validity of Algorithms~I,~II, and~III,
and show that we can check the anti-ultrametric property of given $(\hat{a}_{ij})_{i,j \in [n]}$ in $O(n^2)$ time in Step~3 of Algorithm~I.

\subsection{Proof of Theorem~\ref{thm:Type I,II,III}}\label{subsec:proof}
For brevity of notation,
we denote $\min \{ a_{ij} \mid j \in [n] \setminus \{i\} \}$ by $\min_{j} a_{ij}$.
\subsubsection{Validity of Algorithm~I.}
Observe that if $(a_{ij})_{i,j \in [n]}$ satisfies the anti-tree metric property,
so does $(\hat{a}_{ij})_{i,j \in [n]}$ defined by
\begin{align*}
\hat{a}_{ij} =
\begin{cases}
a_{ij} + b & \text{if $i^* \in \{i,j\}$},\\
a_{ij} & \text{if $i^* \not\in \{i,j\}$},
\end{cases}
\qquad (i,j \in [n],\ i \neq j)
\end{align*}
for some $i^* \in [n]$ and $b \in \mathbf{R}$.
This means that the (inverse) operation of Step~2 does not change the anti-tree metric property.
Furthermore, It is known that the anti-ultrametric property is stronger than the anti-tree metric property~\cite{JJIAM/HM04,book/SempleSteel03}.
Thus, if Algorithm~I returns the output ``$f$ is M-convex,''
then $(a_{ij})_{i,j \in [n]}$ satisfies the anti-tree metric property.
Therefore, the validity of Algorithm~I is established by proving that Algorithm~I returns ``$f$ is M-convex''
whenever $(a_{ij})_{i,j \in [n]}$ satisfies the anti-tree metric property.
We need some lemmas to show this statement.
In the following, suppose that $(a_{ij})_{i,j \in [n]}$ satisfies the anti-tree metric property.
Recall that $\alpha := \min\{ a_{ij} \mid i,j \in [n]\}$.

\begin{lem}\label{lem:Type I key}
	Suppose that $\min_{j} a_{ij} = \alpha$ holds for all $i \in [n]$.
	Then $(a_{ij})_{i,j \in [n]}$ satisfies the anti-ultrametric property.
\end{lem}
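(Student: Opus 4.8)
The plan is to verify the defining inequality of the anti-ultrametric property, namely $a_{ij} \geq \min\{a_{ik}, a_{jk}\}$, for an arbitrary triple of distinct indices $i,j,k \in [n]$, by invoking the anti-tree metric property on a well-chosen fourth index. The guiding observation is that the hypothesis gives $\min_j a_{kj} = \alpha$ for the index $k$, so there exists $l \in [n] \setminus \{k\}$ with $a_{kl} = \alpha$; and since $\alpha$ is the global minimum, $a_{pq} \geq \alpha$ for all distinct $p,q \in [n]$. First I would fix $i,j,k$ and select such an $l$, then split into two cases according to whether $l$ collides with $i$ or $j$.

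The degenerate case is $l \in \{i,j\}$: if $l = i$ then $a_{ik} = a_{kl} = \alpha$, whence $\min\{a_{ik}, a_{jk}\} = \alpha$ because $a_{jk} \geq \alpha$, and the inequality follows from $a_{ij} \geq \alpha$; the case $l = j$ is symmetric. This disposes of the possibility that the row-$k$ minimizer happens to be one of the two indices already under consideration.

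The main step is the case $l \notin \{i,j\}$, where $i,j,k,l$ are four distinct indices. Here I would apply the anti-tree metric property in the form that singles out the pairing $\{i,j\}$ against $\{k,l\}$,
\begin{align*}
a_{ij} + a_{kl} \geq \min\{a_{ik} + a_{jl},\ a_{il} + a_{jk}\},
\end{align*}
and substitute $a_{kl} = \alpha$. If the minimum on the right is $a_{ik} + a_{jl}$, then $a_{ij} \geq a_{ik} + (a_{jl} - \alpha) \geq a_{ik}$ since $a_{jl} \geq \alpha$; if it is $a_{il} + a_{jk}$, then $a_{ij} \geq a_{jk} + (a_{il} - \alpha) \geq a_{jk}$ since $a_{il} \geq \alpha$. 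In either outcome $a_{ij} \geq \min\{a_{ik}, a_{jk}\}$, as required.

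Since $i,j,k$ were arbitrary, combining the two cases yields the anti-ultrametric property. I do not expect a serious obstacle: the whole argument hinges on the single idea of taking $l$ to be a row-$k$ minimizer so that $a_{kl} = \alpha$, which is exactly what collapses the four-index anti-tree metric inequality onto the three-index anti-ultrametric one; the only point demanding care is the coincidence $l \in \{i,j\}$, which the first case absorbs.
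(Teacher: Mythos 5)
Your proof is correct and takes essentially the same approach as the paper's: both pick a row-$k$ minimizer $l$ with $a_{kl} = \alpha$ and collapse the four-point anti-tree metric inequality with the pairing $\{i,j\}$ against $\{k,l\}$ onto the three-point anti-ultrametric inequality. The only difference is cosmetic: the paper argues by contradiction, assuming $a_{ij} < \min\{a_{ik}, a_{jk}\}$, which forces $a_{ik} > \alpha < a_{jk}$ and hence $l \notin \{i,j\}$ automatically, whereas your direct argument handles the collision $l \in \{i,j\}$ in a separate (trivial) case.
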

\begin{proof}
	Suppose, to the contrary, that there exist distinct $i,j,k \in [n]$ with $a_{ij} < \min\{a_{ik}, a_{jk}\}$.
	Then $a_{ik} > \alpha < a_{jk}$ holds.
	Hence, by the assumption,
	there exists $l \in [n] \setminus \{i,j,k\}$ satisfying $a_{kl} = \alpha$.
	Then we obtain $a_{ik} > a_{ij} < a_{jk}$ and $a_{jl} \geq a_{kl} \leq a_{il}$.
	Thus, for such $i,j,k,l \in [n]$,
	it holds that $a_{ik} + a_{jl} > a_{ij} + a_{kl} < a_{il} + a_{jk}$.
	This contradicts the anti-tree metric property of $(a_{ij})_{i,j \in [n]}$.
\end{proof}

By Lemma~\ref{lem:Type I key},
it suffices to show that if $(a_{ij})_{i,j \in [n]}$ satisfies the anti-tree metric property,
it holds that $\min_{j} \hat{a}_{ij} = \alpha$ for any $i \in [n]$ after Step~2 of Algorithm~I.
In the following, we prove this.

\begin{lem}\label{lem:Type I 1}
	Suppose that $\min_{j'} a_{ij'} = a_{ij} > \alpha$ holds
	for $i,j \in [n]$.
	Then there exists $k \in [n]$ such that $a_{jk} = \alpha$.
\end{lem}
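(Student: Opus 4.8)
The plan is to locate a globally minimal pair and transport its minimality to row $j$ via a single application of the anti-tree metric inequality. First I would fix indices $p,q \in [n]$ attaining the global minimum, so $a_{pq} = \alpha$ (such a pair exists by definition of $\alpha$). Because $\min_{j'} a_{ij'} = a_{ij} > \alpha$, every entry in row $i$ strictly exceeds $\alpha$; in particular $a_{ip}, a_{iq} > \alpha$, which forces $p \neq i$ and $q \neq i$.

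If $j \in \{p,q\}$, say $j = p$, then $a_{jq} = a_{pq} = \alpha$ and $k = q$ already works, so I may assume $j \notin \{p,q\}$. Since $i \neq j$, $i \notin \{p,q\}$, and $j \notin \{p,q\}$, the four indices $i,j,p,q$ are distinct (here the hypothesis $n \geq 4$ guarantees they exist), and the anti-tree metric property applies:
\[
a_{ij} + a_{pq} \geq \min\{a_{ip} + a_{jq},\ a_{iq} + a_{jp}\}.
\]

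Next I would split on which term realizes the minimum. If it is $a_{ip} + a_{jq}$, then substituting $a_{pq} = \alpha$ and rearranging gives $a_{jq} \leq (a_{ij} - a_{ip}) + \alpha$; since $a_{ij} = \min_{j'} a_{ij'} \leq a_{ip}$, the parenthesized term is nonpositive, whence $a_{jq} \leq \alpha$. As $\alpha$ is the global minimum, $a_{jq} = \alpha$ and $k = q$ works. The remaining case, in which $a_{iq} + a_{jp}$ realizes the minimum, is symmetric: the bound $a_{ij} \leq a_{iq}$ yields $a_{jp} \leq \alpha$, hence $a_{jp} = \alpha$ and $k = p$.

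I do not anticipate a serious obstacle, as the argument is a single use of the defining inequality combined with the row-minimality bounds $a_{ij} \leq a_{ip}$ and $a_{ij} \leq a_{iq}$. The only points requiring care are verifying that $i,j,p,q$ are genuinely distinct before invoking the anti-tree metric property, and disposing of the degenerate case $j \in \{p,q\}$ separately at the outset.
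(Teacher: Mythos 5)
Your proof is correct, but it takes a genuinely different route from the paper. The paper proves this lemma by induction on $n$: the base case $n=4$ is handled by locating $a_{34}=\alpha$ and deriving the forbidden pattern $a_{14}+a_{23} > a_{12}+a_{34} < a_{13}+a_{24}$, and the inductive step restricts $(a_{ij})$ to $\{2,\dots,n\}$, invokes the hypothesis to find a minimal entry in row $3$, and again reaches a contradiction with the anti-tree metric inequality. You instead argue directly: fix any globally minimal pair $(p,q)$, observe $i \notin \{p,q\}$ since row $i$ sits strictly above $\alpha$, dispose of $j \in \{p,q\}$ trivially, and then a single application of $a_{ij}+a_{pq} \geq \min\{a_{ip}+a_{jq},\, a_{iq}+a_{jp}\}$ combined with the row-minimality bounds $a_{ij} \leq a_{ip}$, $a_{ij} \leq a_{iq}$ forces $a_{jq}=\alpha$ or $a_{jp}=\alpha$. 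This is shorter, avoids both the induction and the restriction-to-submatrix step, and in fact proves slightly more: for \emph{every} minimal pair $\{p,q\}$ not containing $j$, the index $j$ attains $\alpha$ with $p$ or with $q$ --- a statement in the same spirit as the paper's subsequent Lemma~\ref{lem:Type I 2} on short paths in $G_{\min}$. Two small points deserve explicit mention. First, your rearrangement involves subtraction, which is illegitimate if $a_{ip}$ (or $a_{iq}$) equals $+\infty$; but this is harmless, since whichever term realizes the minimum is bounded above by the finite quantity $a_{ij}+\alpha$, so its summands are finite in exactly the case where you subtract --- you should say this, as entries may be $+\infty$ in Type~I. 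Second, both your argument and the paper's implicitly use $a_{ij} = \min_{j'} a_{ij'} < +\infty$; this is guaranteed by the standing assumption of Condition~A together with $r \geq 2$ (each row has a finite entry), so you are on the same footing as the paper here.
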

\begin{proof}
	We show this by induction on $n$ (the number of variables of $f$).
	
	In the case of $n = 4$,
	it suffices to prove that if $\min \{a_{12}, a_{13}, a_{14}\} = a_{12} > \alpha$,
	we have $\min\{a_{23}, a_{24}\} = \alpha$.
	Suppose, to the contrary, that $\min\{a_{23}, a_{24}\} > \alpha$.
	By the assumption and $\min \{a_{12}, a_{13}, a_{14}, a_{23},a_{24},a_{34}\} = \alpha$,
	we obtain $a_{34} = \alpha$.
	Then, since $\min \{a_{12}, a_{13}, a_{14}\} = a_{12}$,
	we have $a_{14} \geq a_{12} \leq a_{13}$,
	and since $\min\{a_{23}, a_{24}\} > \alpha$,
	we have $a_{23} > a_{34} < a_{24}$.
	Therefore we have $a_{14} + a_{23} > a_{12} + a_{34} < a_{13}+a_{24}$.
	This contradicts the anti-tree metric property of $(a_{ij})_{i,j \in [n]}$.
	
	In the case of $n \geq 5$,
	it suffices to prove that if $\min\{a_{12}, a_{13}, \dots, a_{1n}\} = a_{12} > \alpha$,
	we have $\min\{a_{23}, a_{24}, \dots, a_{2n}\} = \alpha$.
	Suppose, to the contrary, that $\min\{a_{23}, a_{24}, \dots, a_{2n}\} = a_{23} > \alpha$.
	Since $(a_{ij})_{i,j \in [n]}$ defines an M-convex function,
	$(a_{ij})_{i,j \in \{2,3,\dots,n \}}$ also defines an M-convex function.
	Moreover, since $\min\{a_{12}, a_{13}, \dots, a_{1n}\} > \alpha$,
	we have $\min\{ a_{ij} \mid i,j \in \{2,3,\dots, n\} \} = \alpha$.
	By $\min\{a_{23}, a_{24}, \dots, a_{2n}\} = a_{23} > \alpha$ and the induction hypothesis,
	there exists $k \in \{2,3,\dots,n \}$ such that $a_{3k} = \alpha$ (without loss of generality assume $k = 4$).
	Since $\min\{a_{12}, a_{13}, \dots, a_{1n}\} = a_{12}$ and $\min\{a_{23}, a_{24}, \dots, a_{2n}\} > \alpha$,
	it holds that $a_{14} \geq a_{12} \leq a_{13}$ and $a_{23} > a_{34} < a_{24}$.
	Therefore we have $a_{14}+ a_{23} > a_{12} + a_{34} < a_{13} + a_{24}$.
	This contradicts the anti-tree metric property of $(a_{ij})_{i,j \in [n]}$.
\end{proof}

Define $G_{\min} := (V_{\min}, E_{\min})$ by $E_{\min} := \{\{i,j\} \mid a_{ij} = \alpha \}$ and $V_{\min} := \{ i \in [n] \mid i \in \exists e \in E_{\min} \}$.
\begin{lem}\label{lem:Type I 2}
	$G_{\min}$ is connected.
	Moreover, for all $i,j \in V_{\min}$,
	there exists an $i$-$j$ path having at most two edges in $G_{\min}$.
\end{lem}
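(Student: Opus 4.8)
The plan is to prove the stronger ``moreover'' assertion first, since the existence of an $i$-$j$ path with at most two edges for every $i,j \in V_{\min}$ immediately gives that $G_{\min}$ is connected (here $E_{\min} \neq \emptyset$ because $\alpha$ is attained, so $V_{\min} \neq \emptyset$ and the degenerate cases are harmless). Fix $i,j \in V_{\min}$. If $a_{ij} = \alpha$ then $\{i,j\} \in E_{\min}$ is itself the desired one-edge path, so I would assume $a_{ij} > \alpha$. By the definition of $V_{\min}$ there exist $k \neq i$ and $l \neq j$ with $a_{ik} = \alpha$ and $a_{jl} = \alpha$. If $k = l$, then $a_{ik} = a_{jk} = \alpha$, so $k$ is a common neighbor and $i$-$k$-$j$ is a two-edge path; hence I may further assume $k \neq l$.

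First I would check that $i,j,k,l$ are pairwise distinct: $k \neq i$ and $l \neq j$ hold by construction, $k \neq j$ and $l \neq i$ because otherwise $a_{ij}$ would equal $\alpha$ (contradicting $a_{ij} > \alpha$), and $k \neq l$ by assumption. The key step is then to invoke the anti-tree metric property on this quadruple with the minimum-valued pairing $\{i,k\},\{j,l\}$ placed on the left-hand side, i.e.\ in the form $a_{ik} + a_{jl} \geq \min\{a_{ij} + a_{kl},\, a_{il} + a_{jk}\}$. The left side equals $2\alpha$, while minimality of $\alpha$ gives $a_{ij} + a_{kl} \geq 2\alpha$ and $a_{il} + a_{jk} \geq 2\alpha$; hence the right-hand minimum is exactly $2\alpha$ and is attained by at least one of the two sums. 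Since $a_{ij} > \alpha$ rules out $a_{ij} + a_{kl} = 2\alpha$, I conclude $a_{il} + a_{jk} = 2\alpha$, which forces $a_{jk} = \alpha$ (and incidentally $a_{il} = \alpha$). Combined with $a_{ik} = \alpha$, this exhibits $k$ as a common neighbor of $i$ and $j$, yielding the two-edge path $i$-$k$-$j$.

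I expect the main obstacle to be organizational rather than deep. The crucial point is choosing the correct relabeling so that the four-point inequality is applied with $\{i,k\},\{j,l\}$ on the left: applying it in the ``natural'' orientation with $a_{ij} + a_{kl}$ on the left yields only the trivial bound $a_{ij}+a_{kl} \geq 2\alpha$ and gives no usable information. The remaining care is the bookkeeping that certifies $i,j,k,l$ are genuinely distinct (so that the anti-tree metric property, stated only for distinct indices, is applicable) together with the degenerate case $k=l$, both of which are dispatched above before the inequality is used.
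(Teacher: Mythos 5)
Your proof is correct and is essentially the paper's argument: both take a minimum-weight neighbor $k$ of $i$ and $l$ of $j$ and apply the four-point inequality with the pairing $a_{ik}+a_{jl}=2\alpha$ on the left-hand side, forcing $a_{jk}=\alpha$ (the paper phrases this as a contradiction from $a_{ij}+a_{kl} > a_{ik}+a_{jl} < a_{il}+a_{jk}$, you phrase it directly). Your explicit handling of the degenerate cases ($k=l$ and the distinctness of $i,j,k,l$) is a point the paper glosses over, but it is bookkeeping, not a different route.
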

\begin{proof}
	It suffices to prove that there exists an $i$-$j$ path having at most two edges in $G_{\min}$ for all distinct $i,j \in V_{\min}$.
	Suppose, to the contrary, that $G_{\min}$ does not have an $i$-$j$ path with at most two edges for some $i,j \in V_{\min}$.
	Hence we have $\{i,j\} \not\in E_{\min}$.
	Furthermore there exist $k,l \in V_{\min}$ such that $\{i,k\}, \{j,l\} \in E_{\min} \not\ni \{i,l\}, \{j,k\}$.
	For such $i,j,k,l$,
	we obtain $a_{ij} + a_{kl} > a_{ik}+a_{jl} < a_{il}+a_{jk}$, a contradiction.
\end{proof}

\begin{lem}\label{lem:Type I 3}
	Suppose that $\min_{j'} a_{ij'} = a_{ij} > \alpha$ holds
	for $i,j \in [n]$.
	Define $b_i := a_{ij} - \alpha$.
	Then $a_{ik} - b_i \geq \min_{i'} a_{i'k}$ holds for all $k \in [n] \setminus \{i\}$.
\end{lem}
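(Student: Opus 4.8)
The plan is to reduce the desired inequality to a single instance of the anti-tree metric property, using the auxiliary index furnished by Lemma~\ref{lem:Type I 1}. Write $m_k := \min_{i'} a_{i'k}$ for the right-hand side quantity; since $(a_{ij})$ is symmetric, $m_k$ is just the minimum entry of the $k$-th row. Because $\min_{j'} a_{ij'} = a_{ij} > \alpha$, Lemma~\ref{lem:Type I 1} supplies an index $l$ with $a_{jl} = \alpha$. As $a_{ji} = a_{ij} > \alpha = a_{jl}$, we have $l \neq i$, and trivially $l \neq j$; moreover $m_j = \alpha$ and $m_l = \alpha$, since $a_{jl} = \alpha$ lies in both rows $j$ and $l$ and $\alpha$ is the global minimum.

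First I would dispose of the two degenerate values of $k$. For $k = j$ the left-hand side equals $a_{ij} - b_i = a_{ij} - (a_{ij} - \alpha) = \alpha = m_j$, so the inequality holds with equality. For $k = l$ we have $a_{il} - b_i = a_{il} - a_{ij} + \alpha \geq \alpha = m_l$, where the inequality uses $a_{il} \geq a_{ij} = \min_{j'} a_{ij'}$.

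The main case is $k \in [n] \setminus \{i, j, l\}$, so that $i, j, k, l$ are four distinct indices. Here I would apply the anti-tree metric property in the form obtained by ordering the quadruple as $(i,k,j,l)$, namely $a_{ik} + a_{jl} \geq \min\{a_{ij} + a_{kl}, a_{il} + a_{jk}\}$. Both sums on the right are bounded below by $a_{ij} + m_k$: indeed $a_{ij} + a_{kl} \geq a_{ij} + m_k$ because $a_{kl} \geq m_k$, and $a_{il} + a_{jk} \geq a_{ij} + m_k$ because $a_{il} \geq a_{ij}$ and $a_{jk} \geq m_k$. Consequently $a_{ik} + a_{jl} = a_{ik} + \alpha \geq a_{ij} + m_k$, which rearranges to $a_{ik} - b_i = a_{ik} - a_{ij} + \alpha \geq m_k$, as required.

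The substantive content is carried entirely by Lemma~\ref{lem:Type I 1}, which produces the $\alpha$-neighbor $l$ of $j$; everything else is a one-line invocation of the anti-tree metric inequality together with the three minimality facts $a_{il} \geq a_{ij}$ and $a_{kl}, a_{jk} \geq m_k$. I therefore expect the only obstacle to be organizational rather than analytic: confirming that $l \notin \{i,j\}$ and correctly peeling off the overlap cases $k \in \{j, l\}$ so that the four indices in the main application are genuinely distinct.
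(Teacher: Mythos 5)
Your proof is correct, and it takes a genuinely shorter route than the paper's. The paper's proof first rules out $a_{ik} = \min_{i'} a_{i'k}$ via Lemma~\ref{lem:Type I 1}, and then distinguishes three cases according to where the row-$k$ minimum $m_k := \min_{i'} a_{i'k}$ is attained and whether the corresponding entry against $j$ equals $\alpha$; in its Case~3 it invokes Lemma~\ref{lem:Type I 2} to extract, from a $j$--$l$ path with at most two edges in $G_{\min}$, a common $\alpha$-neighbour $p$ of $j$ and $l$, and applies the four-point inequality to $(i,k,j,p)$. You instead fix once and for all the $\alpha$-neighbour $l$ of $j$ supplied by Lemma~\ref{lem:Type I 1}, peel off $k \in \{j,l\}$, and use the single instance $a_{ik} + a_{jl} \geq \min\{a_{ij} + a_{kl},\, a_{il} + a_{jk}\}$; your key observation is that the location of the row-$k$ argmin is irrelevant, because the definitional bounds $a_{kl} \geq m_k$ and $a_{jk} \geq m_k$, together with $a_{il} \geq a_{ij}$, force both terms of the minimum to be at least $a_{ij} + m_k$, whence $a_{ik} + \alpha \geq a_{ij} + m_k$, i.e., $a_{ik} - b_i \geq m_k$. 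This uniformly covers $m_k = \alpha$ and $m_k > \alpha$, and it even renders the paper's preliminary exclusion of $a_{ik} = m_k$ unnecessary (your derived inequality gives $m_k \leq a_{ik} - b_i < a_{ik}$ as a byproduct). Your distinctness checks ($l \neq i$ from $a_{ij} > \alpha$, $l \neq j$ trivially, and $k \notin \{i,j,l\}$ in the main case) are in order. What your route buys is substantial: the three-way case analysis disappears, and Lemma~\ref{lem:Type I 2} becomes superfluous for the validity of Algorithm~I, since this lemma's only point of use in the paper is Case~3 here. Indeed, even in the paper's own Case~3 the extra property $a_{lp} = \alpha$ is never used in the displayed inequality chain --- only $a_{jp} = \alpha$ and the distinctness of $i,k,j,p$ matter, and $p \neq k$ already follows from $a_{jk} \geq m_k > \alpha$ --- so your argument can be read as isolating and exploiting exactly that redundancy; the paper's version makes the structure of $G_{\min}$ explicit but gains nothing for this particular lemma.
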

\begin{proof}
	Take any $k \in [n] \setminus \{i\}$.
	If $\min_{i'} a_{i'k} = \alpha$,
	then it holds that $a_{ik} - b_i = a_{ik} - a_{ij} + \alpha \geq \alpha = \min_{i'} a_{i'k}$.
	This means that if $k = j$,
	the statement holds by Lemma~\ref{lem:Type I 1}.
	Hence, in the following, suppose $\min_{i'} a_{i'k} > \alpha$
	(note that $k \neq j$ holds).
	If $a_{ik} = \min_{i'} a_{i'k}$,
	we have $\min_{j'} a_{ij'} = \alpha$ by Lemma~\ref{lem:Type I 1}.
	This contradicts $\min_{j'} a_{ij'} = a_{ij} > \alpha$.
	Thus we obtain $a_{ik} > \min_{i'} a_{i'k}$.
	We consider the following three cases.
	
	(Case 1: $\min_{i'} a_{i'k} = a_{jk}$).
	By Lemma~\ref{lem:Type I 1},
	$a_{jl} = \alpha$ holds for some $l \in [n] \setminus \{i,j,k\}$.
	Since $(a_{ij})_{i,j \in [n]}$ satisfies the anti-tree metric property,
	we have $a_{ik} + a_{jl} \geq \min\{ a_{ij}+a_{kl}, a_{il}+a_{jk} \}$.
	Moreover, by $\min_{j'} a_{ij'} = a_{ij}$ and $\min_{i'} a_{i'k} = a_{jk}$,
	it holds that $a_{il} \geq a_{ij}$ and $a_{kl} \geq a_{jk}$, respectively.
	Hence $a_{ik} + a_{jl} \geq \min\{ a_{ij}+a_{kl}, a_{il}+a_{jk} \} \geq a_{ij} + a_{jk}$ holds.
	Since $a_{jl} = \alpha$ and $b_i = a_{ij} - \alpha$ hold,
	we obtain $a_{ik} - b_i \geq a_{jk} = \min_{i'} a_{i'k}$.
	
	(Case 2: $\min_{i'} a_{i'k} = a_{kl}$ for some $l \neq j$ and $a_{jl} = \alpha$).
	Since $(a_{ij})_{i,j \in [n]}$ satisfies the anti-tree metric property,
	we have $a_{ik} + a_{jl} \geq \min\{ a_{ij}+a_{kl}, a_{il}+a_{jk} \}$.
	Moreover, by $\min_{j'} a_{ij'} = a_{ij}$ and $\min_{i'} a_{i'k} = a_{kl}$,
	it holds that $a_{il} \geq a_{ij}$ and $a_{jk} \geq a_{kl}$, respectively.
	Hence $a_{ik} + a_{jl} \geq \min\{ a_{ij}+a_{kl}, a_{il}+a_{jk} \} \geq a_{ij} + a_{kl}$ holds.
	Since $a_{jl} = \alpha$ and $b_i = a_{ij} - \alpha$ hold,
	we obtain $a_{ik} - b_i \geq a_{kl} = \min_{i'} a_{i'k}$.
	
	(Case 3: $\min_{i'} a_{i'k} = a_{kl}$ for some $l \neq j$ and $a_{jl} > \alpha$).
	By Lemma~\ref{lem:Type I 1},
	we have $j, l \in V_{\min}$
	since $\min_{j'} a_{ij'} = a_{ij} > \alpha$ and
	$\min_{i'} a_{i'k} = a_{kl} > \alpha$.
	By Lemma~\ref{lem:Type I 2},
	there exists a $j$-$l$ path having at most two edges.
	Then the assumption of $a_{jl} > \alpha$ means that there exists $p \in [n] \setminus \{i,j,k,l\}$ such that $a_{jp} = a_{lp} = \alpha$.
	
	
	Since $(a_{ij})_{i,j \in [n]}$ satisfies the anti-tree metric property, it holds that $a_{ik} + a_{jp} \geq \min\{ a_{ip}+a_{jk}, a_{ij}+a_{kp} \}$.
	By $\min_{j'} a_{ij'} = a_{ij}$ and $\min_{i'} a_{i'k} = a_{kl}$,
	it holds that $a_{ip} \geq a_{ij}$ and $a_{kp} \geq a_{kl}$, respectively.
	Moreover, by $a_{jk} \geq a_{kl}$,
	we have $a_{ik} + a_{jp} \geq \min\{ a_{ip}+a_{jk}, a_{ij}+a_{kp} \} \geq a_{ij} + a_{kl}$.
	Since $a_{jp} = \alpha$ and $b_i = a_{ij} - \alpha$ hold,
	we obtain $a_{ik} - b_i \geq a_{kl} = \min_{i'} a_{i'k}$.
\end{proof}

Let $(\hat{a}_{ij})_{i,j \in [n]}$ be the quadratic coefficients after Step~2,
that is, $\hat{a}_{ij} = a_{ij} - b_i -b_j$ for distinct $i,j \in [n]$.
We show that $\min_{j} \hat{a}_{ij} = \alpha$ for any $i \in [n]$.

Take any $i \in [n]$.
Since $b_i \geq 0$,
it holds that $\min_j \hat{a}_{ij} \leq \min_j a_{ij} - b_i = \alpha$.
Next we prove $\min_j \hat{a}_{ij} \geq \alpha$.
If $\min_j a_{ij} = \alpha$,
we have $b_i = 0$.
Hence we obtain $\min_j \hat{a}_{ij} = \min_j \{a_{ij} - b_j\} \geq \alpha$.
If $\min_j a_{ij} > \alpha$,
by Lemma~\ref{lem:Type I 3},
we obtain $\min_j \hat{a}_{ij} \geq \min_j \{ \min_k a_{jk} - b_j \} = \alpha$.

\subsubsection{Time Complexity of Algorithm~I.}
It is clear that Steps~1 and~2 in Algorithm~I can be done in $O(n^2)$ time.
In the following,
we devise an $O(n^2)$-time algorithm for determining whether $(\hat{a}_{ij})_{i,j \in [n]}$ satisfies the anti-ultrametric property,
while a direct verification of checking the anti-ultrametric property in Step~3 takes $O(n^3)$ time.

First we present a key lemma for designing an $O(n^2)$-time algorithm.
This fact is well known~\cite{JJIAM/HM04, arxiv/IMZ17} and the point here is to allow the infinite value.
\begin{lem}[{\cite[Lemma 8]{arxiv/IMZ17}}]\label{lem:algo key}
	$(\hat{a}_{ij})_{i,j \in [n]}$ satisfies the anti-ultrametric property if and only if
	there exist some laminar family $\mathcal{L}$ on $[n]$ and some $c_U \in \overline{\mathbf{R}}$ for $U \in \mathcal{L}$ such that
	\begin{itemize}
		\item $[n]\in \mathcal{L}$,
		\item if $U \subsetneq U'$, then $c_U > c_{U'}$ holds,
		\item $\hat{a}_{ij} = c_{U(i,j)}$ holds for any distinct $i,j \in [n]$,
		where $U(i,j)$ is the minimal element in $\mathcal{L}$ including $\{i,j\}$.
	\end{itemize}
\end{lem}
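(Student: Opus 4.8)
The plan is to reduce the statement to the classical correspondence between ultrametrics and laminar (hierarchical) families, adapted to the anti-ultrametric convention and, crucially, to the presence of the value $+\infty$. Throughout I set $\hat{a}_{ii} := +\infty$ so that self-pairs never cause trouble, and for a threshold $t \in \overline{\mathbf{R}}$ I consider the relation $i \approx_t j \iff \hat{a}_{ij} \geq t$ on $[n]$, writing $[i]_t := \{ j \in [n] \mid \hat{a}_{ij} \geq t \}$.

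First I would prove the easy (if) direction. Assume such $\mathcal{L}$ and $(c_U)_{U \in \mathcal{L}}$ exist, and fix distinct $i,j,k$. Let $W \in \mathcal{L}$ be the minimal member containing $\{i,j,k\}$; then $U(i,j), U(i,k), U(j,k) \subseteq W$, so by the monotonicity bullet each of $c_{U(i,j)}, c_{U(i,k)}, c_{U(j,k)}$ is at least $c_W$. A short laminarity argument shows that at least two of these three minimal sets must in fact equal $W$: if, say, $U(i,j)$ and $U(i,k)$ were both proper subsets of $W$, then since both contain $i$ they would be nested, and the larger of the two would already contain $\{i,j,k\}$ while being properly contained in $W$, contradicting the minimality of $W$. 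Hence at least two of $\hat{a}_{ij} = c_{U(i,j)}$, $\hat{a}_{ik} = c_{U(i,k)}$, $\hat{a}_{jk} = c_{U(j,k)}$ equal the common minimum $c_W$, and in either case (whether $U(i,j) = W$ or $U(i,j) \subsetneq W$) one obtains $\hat{a}_{ij} \geq \min\{\hat{a}_{ik}, \hat{a}_{jk}\}$. This is exactly the anti-ultrametric property.

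For the (only-if) direction I would build $\mathcal{L}$ explicitly. The anti-ultrametric inequality, together with symmetry, makes each $\approx_t$ transitive: if $\hat{a}_{ij} \geq t$ and $\hat{a}_{jk} \geq t$, then $\hat{a}_{ik} \geq \min\{\hat{a}_{ij}, \hat{a}_{jk}\} \geq t$. Thus $\approx_t$ is an equivalence relation, its classes $[i]_t$ refine as $t$ grows, and I let $\mathcal{L}$ be the family of all such classes of size at least two, together with $[n] = [i]_{-\infty}$; laminarity is automatic from the refinement property. For $U \in \mathcal{L}$ I set $c_U := \min\{ \hat{a}_{pq} \mid p,q \in U,\ p \neq q \}$. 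The key computation is that every class satisfies $U = [i]_{c_U}$ for $i \in U$, i.e.\ $U$ is the maximal set all of whose internal pairs have value at least $c_U$; granting this, $U(i,j) = [i]_{\hat{a}_{ij}}$ yields $c_{U(i,j)} = \hat{a}_{ij}$, and if $U \subsetneq U'$ with $c_U = c_{U'} = t$, then picking $i \in U$ forces $U' = [i]_t = U$, a contradiction, so the strict monotonicity $c_U > c_{U'}$ holds.

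The main obstacle I anticipate is the careful handling of $+\infty$, which is precisely the point of this version over the finite one. One must check that the thresholding construction still yields a genuine laminar family and that the identity $U = [i]_{c_U}$ survives when some $\hat{a}_{pq} = +\infty$, so that whole blocks may collapse to a single value $c_U = +\infty$; in particular the strict inequality $c_U > c_{U'}$ should be argued uniformly, without splitting into ``finite'' versus ``infinite'' cases, and singletons must be kept out of $\mathcal{L}$ so that they never spuriously violate monotonicity against a block of value $+\infty$. Once the correspondence $U \leftrightarrow c_U$ is shown to be order-reversing and to reproduce $\hat{a}_{ij}$ exactly, both implications close.
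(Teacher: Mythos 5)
Your proof is correct, but note that the paper never proves this lemma at all: it is imported wholesale as \cite[Lemma~8]{arxiv/IMZ17}, so your argument is a self-contained substitute rather than a variant of anything in the text. Both directions check out. For the (if) direction, your laminarity claim is exactly right: any two of $U(i,j)$, $U(i,k)$, $U(j,k)$ share a point and are hence nested, so at most one can be a proper subset of the minimal $W \supseteq \{i,j,k\}$; thus at least two of the three values equal $c_W$ while the third is $\geq c_W$, which gives the anti-ultrametric inequality (in fact the stronger ``minimum attained twice'' form). For the (only-if) direction, transitivity of $\approx_t$ is precisely the anti-ultrametric inequality, the intrinsic definition $c_U := \min\{\hat{a}_{pq} \mid p,q \in U,\ p \neq q\}$ makes the assignment unambiguous across thresholds, and your key identity $U = [i]_{c_U}$ does hold: all internal pairs of a class at threshold $t$ have value $\geq t$, so $c_U \geq t$ and $[i]_{c_U} \subseteq [i]_t = U$, while the definition of $c_U$ gives the reverse inclusion; from it, $U(i,j) = [i]_{\hat{a}_{ij}}$ and strict monotonicity follow as you say, uniformly in $+\infty$ since everything is phrased via order comparisons and minima in $\overline{\mathbf{R}}$. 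Two cosmetic repairs: state explicitly that $c_U \geq c_{U'}$ whenever $U \subseteq U'$ (a minimum over fewer pairs) before upgrading to strictness, and since $-\infty \notin \overline{\mathbf{R}}$, simply adjoin $[n]$ to $\mathcal{L}$ rather than invoking $[i]_{-\infty}$. Structurally, your bottom-up construction (all equivalence classes of all thresholds, with intrinsic $c_U$) is the mirror image of the paper's top-down procedure $\texttt{Decompose}$, which builds the same laminar family by recursively splitting off $\argmin$-blocks of a single row; your description is the cleaner route to a proof of the characterization, while the paper's recursion is what makes the $O(n^2)$ verification algorithm possible.
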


By Lemma~\ref{lem:algo key}, we obtain the following natural procedure \texttt{Decompose},
which updates a laminar family $\mathcal{L}$ and defines $c_U \in \overline{\mathbf{R}}$ for $U \in \mathcal{L}$.
Suppose that we are given $U \subseteq [n]$ and $w \in \overline{\mathbf{R}}$.

\begin{description}
	\item[Procedure: $\texttt{Decompose}(U, w)$.]
	\item[Step 1:] If $|U| \leq 1$ or $w = +\infty$, then stop.
	\item[Step 2:] Take any $i \in U$.
	Define $e := \min \{ \hat{a}_{ij} \mid j \in U \setminus \{i\} \}$ and $X := \argmin \{ \hat{a}_{ij} \mid j \in U \setminus \{i\} \}$.
	\item[Step 3:] If $e > w$, then $\mathcal{L} \leftarrow \mathcal{L} \cup \{ U \}$, $c_U := e$, and $w \leftarrow e$.
	\item[Step 4:] Execute $\texttt{Decompose}(X, w)$ and $\texttt{Decompose}(U \setminus X, w)$.
	\qed
\end{description}
For initialization, let $\mathcal{L} := \{[n]\}$ and $c_{[n]} := \alpha$.
Observe that if $(\hat{a}_{ij})_{i,j \in [n]}$ satisfies the anti-ultrametric property,
$\texttt{Decompose}([n], \alpha)$ constructs an appropriate laminar family $\mathcal{L}$ and $c_U$ for $U \in \mathcal{L}$ corresponding to $(\hat{a}_{ij})_{i,j \in [n]}$.
Moreover $\texttt{Decompose}([n], \alpha)$ runs in $O(n^2)$ time.

We are ready to describe an algorithm for checking the anti-ultrametric property as follows.
\begin{description}
	\item[Algorithm~A (for checking the anti-ultrametric property).]
	\item[Step 1:] Define $\mathcal{L} := \{ [n] \}$ and $c_{[n]} := \alpha$.
	\item[Step 2:] Execute $\texttt{Decompose}([n], \alpha)$.
	\item[Step 3:] Make a copy of $\mathcal{L}$ and denote it by $\mathcal{L}'$,
	that is, $\mathcal{L}' := \{ U' \mid U \in \mathcal{L} \}$ (the base set of $\mathcal{L}'$ is also $[n]$).
	\item[Step 4:] While $\mathcal{L}' \neq \emptyset$, do the following:
	\begin{description}
		\item[Step 4-1:] Take any minimal element $U' \in \mathcal{L}'$.
		Define $a_{ij}' := c_U$ for $\{i,j\} \subseteq U$ and $\{i,j\} \cap U' \neq \emptyset$.
		\item[Step 4-2:] Let $U_+ \in \mathcal{L}$ be the minimal element in $\mathcal{L}$ with $U \subsetneq U_+$.
		Update $U_+' \leftarrow U_+' \setminus U$.
		\item[Step 4-3:] Update $\mathcal{L}' \leftarrow \mathcal{L}' \setminus U'$.
	\end{description}
	\item[Step 5:] If $(\hat{a}_{ij})_{i,j \in [n]} = (a_{ij}')_{i,j \in [n]}$,
	then output ``$(\hat{a}_{ij})_{i,j \in [n]}$ satisfies the anti-ultrametric property.''
	Otherwise, output ``$(\hat{a}_{ij})_{i,j \in [n]}$ does not satisfy the anti-ultrametric property.''
	\qed
\end{description}
In Step~4 of Algorithm~A,
note that we define the value of $a_{ij}'$ exactly once for every distinct $i,j \in [n]$.
Hence the time complexity of Step~4 is $O(n^2)$ time.
Thus, we see that Algorithm~A runs in $O(n^2)$ time.
By Lemma~\ref{lem:algo key}, the validity of Algorithm~A is clear.
Therefore we obtain the following theorem.
\begin{thm}\label{thm:Type I algo}
	Algorithm~{\rm A} works correctly and runs in $O(n^2)$ time.
\end{thm}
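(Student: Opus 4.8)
The plan is to establish the two assertions — correctness and the $O(n^2)$ running time — by leaning on Lemma~\ref{lem:algo key} together with the two facts already recorded about \texttt{Decompose}: that \texttt{Decompose}$([n],\alpha)$ runs in $O(n^2)$ time and that, when $(\hat a_{ij})_{i,j\in[n]}$ has the anti-ultrametric property, it returns a laminar family $\mathcal{L}$ with weights $(c_U)$ realising the characterisation in Lemma~\ref{lem:algo key}. First I would isolate the structural properties of the pair $(\mathcal{L},(c_U))$ produced by \texttt{Decompose} regardless of the input: namely that $[n]\in\mathcal{L}$ (from the initialisation) and that $c_U>c_{U'}$ whenever $U\subsetneq U'$. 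The latter follows from Step~3, where a set $U$ is inserted with $c_U=e$ only when $e$ strictly exceeds the threshold $w$ inherited from its nearest already-inserted ancestor, after which every descendant is recursed upon with the larger threshold $w\leftarrow e$; thus the weights strictly increase as the sets shrink along any inclusion chain, which is exactly the monotonicity bullet of Lemma~\ref{lem:algo key}.

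The heart of the correctness proof is the claim that Step~4 computes, for every distinct $i,j\in[n]$ exactly once, the value $a'_{ij}=c_{U(i,j)}$, where $U(i,j)$ is the minimal member of $\mathcal{L}$ containing $\{i,j\}$. I would prove this by induction on the laminar structure, processing $\mathcal{L}'$ from its minimal (leaf) sets upward: when a set $U$ is handled, all of its proper sub-members have already been removed from $\mathcal{L}'$ and peeled from the working copy $U'$ by Step~4-2, so that $U'$ consists precisely of the elements of $U$ whose least common $\mathcal{L}$-ancestor inside $U$ is $U$ itself, and the selection rule $\{i,j\}\subseteq U,\ \{i,j\}\cap U'\neq\emptyset$ isolates exactly the pairs with $U(i,j)=U$. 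Matching each pair to the weight of its least common ancestor and checking that the peeling touches each pair only once is the routine but delicate bookkeeping, and is where I expect the main difficulty to lie.

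Granting this claim, correctness of Algorithm~A is immediate in both directions. If the comparison in Step~5 succeeds, i.e.\ $a'_{ij}=\hat a_{ij}$ for all distinct $i,j$, then $(\mathcal{L},(c_U))$ meets all three bullet conditions of Lemma~\ref{lem:algo key} — membership of $[n]$, strict monotonicity of the weights, and $\hat a_{ij}=c_{U(i,j)}$ — so $(\hat a_{ij})_{i,j\in[n]}$ has the anti-ultrametric property. Conversely, if $(\hat a_{ij})_{i,j\in[n]}$ has that property, the recorded observation about \texttt{Decompose} furnishes a laminar family and weights realising it, whence Step~4 recovers $a'_{ij}=c_{U(i,j)}=\hat a_{ij}$ and Step~5 reports ``satisfies.'' Hence Algorithm~A outputs ``satisfies'' exactly when the anti-ultrametric property holds.

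For the running time I would simply tally the steps: Steps~1 and~2 cost $O(n^2)$ (the latter being the recorded bound for \texttt{Decompose}); Step~3 copies $\mathcal{L}$, which has $O(n)$ members; Step~4 assigns the value of each of the $\binom{n}{2}$ pairs exactly once, for $O(n^2)$ in total by the claim above; and Step~5 compares the two $n\times n$ arrays in $O(n^2)$. Summing these contributions yields the claimed $O(n^2)$ bound.
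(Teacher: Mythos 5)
Your overall architecture coincides with the paper's own proof, which consists of exactly the assertions you elaborate: the two recorded facts about \texttt{Decompose}, the claim that Step~4 defines each $a'_{ij}$ exactly once, and ``validity is clear by Lemma~\ref{lem:algo key}.'' Your additions are sound where the paper is silent --- in particular, you correctly note that the pair $(\mathcal{L},(c_U))$ output by \texttt{Decompose} satisfies the first two bullets of Lemma~\ref{lem:algo key} for \emph{every} input, which is what makes the ``Step~5 succeeds $\Rightarrow$ property holds'' direction work. However, the claim at the heart of your bookkeeping induction is false as stated. After all proper sub-members of $U$ have been removed and peeled, $U'$ consists of the elements of $U$ lying in no maximal proper sub-member of $U$, so the rule ``$\{i,j\}\subseteq U$ and $\{i,j\}\cap U'\neq\emptyset$'' captures only those pairs with $U(i,j)=U$ having at least one such free endpoint. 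It misses every pair whose two endpoints lie in two \emph{distinct} maximal proper sub-members $W_1,W_2\in\mathcal{L}$ of $U$: for these, $U(i,j)=U$ but $\{i,j\}\cap U'=\emptyset$. Concretely, take $n=4$ with $\hat{a}_{12}=\hat{a}_{34}=5$ and all other $\hat{a}_{ij}=0=\alpha$; this satisfies the anti-ultrametric property, and \texttt{Decompose} produces $\mathcal{L}=\{[4],\{1,2\},\{3,4\}\}$ with $c_{[4]}=0$ and $c_{\{1,2\}}=c_{\{3,4\}}=5$. Under the bottom-up order you describe, $[4]'$ is empty by the time it is processed, so $a'_{13},a'_{14},a'_{23},a'_{24}$ are never defined, and Step~5 would misclassify this yes-instance. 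So the selection rule does \emph{not} ``isolate exactly the pairs with $U(i,j)=U$,'' and the ``exactly once'' count, on which both your correctness and your $O(n^2)$ tally rest, fails precisely at this point.

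In fairness, the paper's own text has the same soft spot: it merely asserts that Step~4 defines each value exactly once and proves nothing about it, so you have faithfully reproduced the published argument, ambiguity included. But since you commit explicitly to the false isolation claim in your inductive step, your write-up contains a genuine gap rather than a mere omission, and you flagged this very spot as ``routine but delicate'' without checking it. The repair is local: when $U$ is processed, assign $c_U$ to every pair $\{i,j\}\subseteq U$ that is not contained in a single maximal proper sub-member of $U$ --- equivalently, the pairs meeting $U'$ together with the pairs joining two distinct children of $U$. Every pair is then assigned exactly once, namely at its minimal common member of $\mathcal{L}$, and since the total number of assignments is exactly $\binom{n}{2}$ (the cross-child pairs can be enumerated with no overhead by looping over the element lists of distinct children), the $O(n^2)$ bound in your final paragraph survives unchanged.
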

By Theorem~\ref{thm:Type I algo},
we can determine whether $(\hat{a}_{ij})_{i,j \in [n]}$ satisfies the anti-ultrametric property in $O(n^2)$ time.

\begin{remark}
	\upshape
	The procedure $\texttt{Decompose}$ has already been proposed in the preprint version of~\cite{JJIAM/HM04} in the context of M-convexity, and \cite{IPL/CR89, JTB/WSSB77} in the context of ultrametrics.
	However, these papers deal with a different case where $a_{ij}$ takes a finite value for all distinct $i,j \in [n]$
	and the effective domain is subset of general integral vectors.
\end{remark}

\begin{remark}
	\upshape
	We can devise another simpler $O(n^2)$-time algorithm for Type~I
	if $a_{ij}$ takes a finite value for all distinct $i,j \in [n]$.
	Indeed,
	for the finite case,
	it is known that $(a_{ij})_{i,j \in [n]}$ satisfies the anti-tree metric property if and only if
	$(\hat{a}_{ij})_{i,j \in [n-1]}$ satisfies the anti-ultrametric property,
	where $\hat{a}_{ij} := a_{ij} - a_{in} - a_{jn}$ for all distinct $i,j \in [n-1]$~(see e.g.,~\cite{book/SempleSteel03}).
	Hence it suffices to check the anti-ultrametric property of $(\hat{a}_{ij})_{i,j \in [n]}$.
	However, in the general case,
	the above algorithm does not work,
	since the relation between anti-tree metric property and anti-ultrametric property fails.
	For example,
	consider the case of $a_{15} = +\infty$, $a_{12} = a_{34} = 2$, $a_{13} = a_{24} = a_{45} = 1$, $a_{14} = a_{23} = a_{25} = a_{35} = 0$.
	Then $(a_{ij})_{i,j \in [5]}$ does not satisfy the anti-tree metric property since $a_{12} + a_{34} (= 4) > a_{13} + a_{24} ( = 2) > a_{14} + a_{23} (= 0)$.
	However, $(\hat{a}_{ij})_{i,j \in [4]}$ satisfies the anti-ultrametric property
	since $\hat{a}_{12} = \hat{a}_{13} = \hat{a}_{14} = -\infty$, $\hat{a}_{23} = \hat{a}_{24} = 0$, and $\hat{a}_{34} = 1$.
\end{remark}

\subsubsection{Validity of Algorithms~II and~III.}
Let $N$ and $M$ be positive integers.
It suffices to prove that $a_{ij} + a_{kl} = a_{il} + a_{kj}$ holds for every $i,k \in [N]$ with $i < k$ and $j,l \in [M]$ with $j < l$
if $a_{ij} + a_{i+1,j+1} = a_{i+1,j} + a_{i,j+1}$ holds for every $i \in [N-1]$ and $j \in [M-1]$.
We show this by induction on $(k-i) + (l-j)$.
For $s \geq 1$ and $t \geq 1$, take any $i,k \in [N]$ with $k = i+s$ and $j,l \in [M]$ with $l = j + t$.
The case $s + t = 2$ holds by the assumption.
Suppose $s + t \geq 3$.
Without loss of generality, $s \geq 2$.
By the induction hypothesis,
we have $a_{ij} + a_{k-1, l} = a_{il} + a_{k-1,j}$ and $a_{k-1,j} + a_{kl} = a_{k-1,l} + a_{kj}$.
Hence we obtain $a_{ij} + a_{kl} = a_{il} + a_{kj}$.
This completes the induction step.

\section*{Acknowledgments}
The author thanks Hiroshi Hirai and Kazuo Murota for careful reading and numerous helpful comments.
The author also thanks the referees for helpful comments.
In particular, Algorithms~II and~III (for Types~II and~III, respectively) are suggested by one of the referees.
This research was supported by JSPS Research Fellowship for Young Scientists.


\end{document}